\newcommand{\vect}[1]{\ensuremath{\mathbf{#1}}}
\newcommand{\card}[1]{\ensuremath{\lvert{#1}\rvert}}
\DeclareMathOperator{\Pol}{Pol}
\DeclareMathOperator{\range}{Im}
\newcommand{\subf}[1][]{\ifthenelse{\equal{#1}{}}{\ensuremath{\leq}}{\ensuremath{\leq_{#1}}}}
\newcommand{\subc}[1][]{\ifthenelse{\equal{#1}{}}{\ensuremath{\preccurlyeq}}{\ensuremath{\preccurlyeq_{#1}}}}
\newcommand{\fequiv}[1][]{\ifthenelse{\equal{#1}{}}{\ensuremath{\equiv}}{\ensuremath{\equiv_{#1}}}}
\newcommand{\eqclass}[2][]{\ifthenelse{\equal{#1}{}}{\ensuremath{[{#2}]}}{\ensuremath{[{#2}]_{#1}}}}
\theoremstyle{plain}
\newtheorem{theorem}{Theorem}[section]
\newtheorem{lemma}[theorem]{Lemma}
\newtheorem{corollary}[theorem]{Corollary}
\theoremstyle{definition}
\newtheorem*{question}{Question}
\newcommand{\cl}[1]{\ensuremath{\mathcal{#1}}}
\newcommand{\FF}{\ensuremath{\mathfrak{F}}} 
\newcommand{\id}{\ensuremath{\mathrm{id}}}
\newcommand{\nset}[1]{\ensuremath{\mathbf{#1}}}
\begin{document}
\title[Submaximal clones on $\{0,1,2\}$ with finitely many relative $\mathcal{R}$-classes]{The submaximal clones on the three-element set with finitely many relative $\mathcal{R}$-classes}
\thanks{This material is based upon work supported by the Hungarian National Foundation for Scientific Research (OTKA) grants no.\ T~048809 and K60148.
}
\author{Erkko Lehtonen}
\address[Erkko Lehtonen]
{University of Luxembourg \\
162a, avenue de la Faïencerie \\
L--1511 Luxembourg \\
Luxembourg}
\email{erkko.lehtonen@uni.lu}
\author{\'Agnes Szendrei}
\address[\'Agnes Szendrei]
{Department of Mathematics \\
University of Colorado at Boulder \\
Campus Box 395 \\
Boulder, CO 80309-0395 \\
USA}
\address
{Bolyai Institute \\
Aradi v\'ertan\'uk tere 1 \\
H--6720 Szeged \\
Hungary }
\email{szendrei@euclid.colorado.edu}
\date{\today}
\begin{abstract}
For each clone $\cl{C}$ on a set $A$ there is an associated equivalence relation analogous to Green's $\mathcal{R}$-relation, which relates two operations on $A$ if and only if each one is a substitution instance of the other using operations from $\cl{C}$. We study the maximal and submaximal clones on a three-element set and determine which of them have only finitely many relative $\mathcal{R}$-classes.
\end{abstract}

\maketitle

\section{Introduction}

This paper is a continuation to a series of studies on how functions can be classified by their substitution instances when inner functions are taken from a given set of functions. Several variants of this idea have been employed in the study of finite functions. Harrison \cite{Harrison} identified two $n$-ary Boolean functions if they are substitution instances of each other with respect to the general linear group $\mathrm{GL}(n, \mathbb{F}_2)$ or the affine general linear group $\mathrm{AGL}(n, \mathbb{F}_2)$ where $\mathbb{F}_2$ denotes the two-element field. Wang and Williams \cite{WW} defined a Boolean function $f$ to be a \emph{minor} of another Boolean function $g$ if $f$ can be obtained by substituting to each variable of $g$ a variable, a negated variable, or a constant $0$ or $1$. Classes of Boolean functions were described in terms of forbidden minors by Wang \cite{Wang}. Variants of the notion of minor were presented for Boolean functions by Feigelson and Hellerstein \cite{FH} and Zverovich \cite{Zverovich} and, in a more general setting, for operations on finite sets by Pippenger \cite{Pippenger}.

Another occurrence of the idea of classifying functions by their substitution instances can be found in semigroup theory. Green's relation $\mathcal{R}$ on a transformation semigroup $S$ relates two transformations $f, g \in S$ if and only if $f(x) = g \bigl( h_1(x) \bigr)$ and $g(x) = f \bigl( h_2(x) \bigr)$ for some $h_1, h_2 \in S \cup \{\id\}$. Henno \cite{Henno} generalized Green's relations to Menger systems (essentially, abstract clones) and described Green's relations on the clone $\cl{O}_A$ of all operations on $A$ for every set $A$. In particular, he proved that two operations on $A$ are $\mathcal{R}$-equivalent if and only if their ranges coincide.

The notions of `minor' and `$\mathcal{R}$-equivalence' for operations on a set $A$ can be defined relative to any clone $\cl{C}$ on $A$. Namely, let $\cl{C}$ be a fixed clone on $A$, and let $f$ and $g$ be operations on $A$. Then $f$ is a $\cl{C}$-minor of $g$ if $f$ can be obtained from $g$ by substituting operations from $\cl{C}$ for the variables of $g$, and $f$ and $g$ are $\cl{C}$-equivalent if each of $f$ and $g$ is a $\cl{C}$-minor of the other. Thus, Green's relation $\mathcal{R}$ described by Henno is the same notion as $\cl{O}_A$-equivalence, and each of the various notions of minor mentioned in the first paragraph corresponds to the notion of $\cl{C}$-minor for one of the smallest clones $\cl{C}$ containing only essentially at most unary operations.

This paper focuses on the following question:
\begin{question}
For which clones $\cl{C}$ are there only finitely many $\cl{C}$-equivalence classes?
\end{question}

Let us denote the set of clones on $A$ that have this property by $\FF_A$. It is easy to see that $\FF_A$ forms an order filter on the lattice of clones on $A$. Henno's result about $\cl{O}_A$-equivalence quoted above implies that $\cl{O}_A \in \FF_A$ if and only if $A$ is finite. Thus the filter $\FF_A$ is nonempty if and only if $A$ is finite. The filter is proper if $\card{A} > 1$, since the clone of projections fails to belong to $\FF_A$. In \cite{LSdiscr} we proved that every discriminator clone on $A$ belongs to $\FF_A$; furthermore, the smallest discriminator clone on $A$ is a minimal element of $\FF_A$. Moreover, for $\card{A} = 2$, the members of $\FF_A$ are precisely the discriminator clones. This is no longer true for $\card{A} > 2$, since, for example, S\l{}upecki's clone is a member of $\FF_A$ but it is not a discriminator clone.

In order to get a better understanding of the structure of the filter $\FF_A$ for finite sets $A$ of more than two elements, it is worthwhile investigating clones near the top of the lattice of clones on $A$. In \cite{LSfilter}, we decided for each clone $\cl{C}$ on a finite set $A$ that is either a maximal clone or the intersection of maximal clones whether $\cl{C} \in \FF_A$. The next natural step in this direction is taking a look at submaximal clones. The submaximal clones on the three-element set $\{0, 1, 2\}$ are well-known (see, e.g., \cite{Lau2006}), and this fact calls for a classification of these clones according to whether they are members of the filter $\FF_{\{0,1,2\}}$. That is the very goal of the current paper.

\section{Preliminaries}

Let $A$ be a nonempty set. An \emph{operation} on $A$ is a map $f \colon A^n \to A$ for some positive integer $n$, called the \emph{arity} of $f$. The set of all $n$-ary operations on $A$ is denoted by $\cl{O}_A^{(n)}$, and the set of all operations on $A$ is denoted by $\cl{O}_A$, i.e., $\cl{O}_A = \bigcup_{n \geq 1} \cl{O}_A^{(n)}$. The $n$-ary $i$-th \emph{projection} is the operation $p_i^{(n)}$ that maps every $n$-tuple $(a_1, \dotsc, a_n) \in A^n$ to its $i$-th component $a_i$. For $f \in \cl{O}_A^{(n)}$ and $g_1, \dotsc, g_n \in \cl{O}_A^{(m)}$, the \emph{composition} of $f$ with $g_1, \dotsc, g_n$ is the $m$-ary operation $f(g_1, \dotsc, g_n)$ defined by
\[
f(g_1, \dotsc, g_n)(\vect{a}) = f \bigl( g_1(\vect{a}), \dotsc, g_n(\vect{a}) \bigr) \qquad \text{for all $\vect{a} \in A^m$.}
\]
Every function $h \colon A^n \to A^m$ is uniquely determined by the $m$-tuple of operations $\vect{h} = (h_1, \dotsc, h_m)$ where $h_i = p_i^{(m)} \circ h \colon A^n \to A$ ($1 \leq i \leq m$). From now on, we will identify each function $h \colon A^n \to A^m$ with the corresponding $m$-tuple $\vect{h} = (h_1, \dotsc, h_m) \in (\cl{O}_A^{(n)})^m$ of $n$-ary operations.

A \emph{clone} on $A$ is a subset $\cl{C} \subseteq \cl{O}_A$ that contains all projections and is closed under composition. The clones on $A$ form a complete lattice under inclusion. Therefore, for each set $F \subseteq \cl{O}_A$ of operations there exists a smallest clone that contains $F$, which will be denoted by $\langle F \rangle$ and will be referred to as the \emph{clone generated by $F$.} The \emph{$n$-ary part} of a clone $\cl{C}$ is the set $\cl{C}^{(n)} = \cl{C} \cap \cl{O}_A^{(n)}$.

Let $\rho \subseteq A^r$ be a relation. The $n$-th \emph{direct power} of $\rho$ is the $r$-ary relation on $A^n$ defined by
\[
\bigl( (a_{11}, a_{12}, \dotsc, a_{1n}), (a_{21}, a_{22}, \dotsc, a_{2n}), \dotsc, (a_{r1}, a_{r2}, \dotsc, a_{rn}) \bigr) \in \rho^n
\]
if and only if $(a_{1i}, a_{2i}, \dotsc, a_{ri}) \in \rho$ for all $i \in \{1, \dotsc, n\}$. If $(\vect{a}_1, \vect{a}_2, \dotsc, \vect{a}_r) \in \rho^n$, we also say that the $n$-tuples $\vect{a}_1, \vect{a}_2, \dotsc, \vect{a}_r$ are \emph{coordinatewise $\rho$-related.}

We say that an operation $f \in \cl{O}_A^{(n)}$ \emph{preserves} an $r$-ary relation $\rho$ on $A$ (or $\rho$ is an \emph{invariant} of $f$, or $f$ is a \emph{polymorphism} of $\rho$), if for all $(a_{1i}, a_{2i}, \dotsc, a_{ri}) \in \rho$, $i = 1, \dotsc, n$, it holds that
\[
\bigl( f(a_{11}, a_{12}, \dotsc, a_{1n}), f(a_{21}, a_{22}, \dotsc, a_{2n}), \dotsc, f(a_{r1}, a_{r2}, \dotsc, a_{rn}) \bigr) \in \rho,
\]
in other words, $\bigl( f(\vect{a}_1), f(\vect{a}_2), \dotsc, f(\vect{a}_r) \bigr) \in \rho$ whenever the $n$-tuples $\vect{a}_1, \vect{a}_2, \dotsc, \vect{a}_r$ are coordinatewise $\rho$-related. We will say that $\vect{f} = (f_1, \dotsc, f_m) \in (\cl{O}_A^{(n)})^m$ \emph{preserves} an $r$-ary relation $\rho$ on $A$ if each $f_i$ ($1 \leq i \leq m$) does; that is
\[
(\vect{a}_1, \dotsc, \vect{a}_r) \in \rho^n
\quad \Rightarrow \quad
\bigl( f(\vect{a}_1), \dotsc, f(\vect{a}_r) \bigr) \in \rho^m \quad \text{for all $\vect{a}_1, \dotsc, \vect{a}_r \in A^n$.}
\]
The set of all operations on $A$ preserving a relation $\rho$ is denoted by $\Pol \rho$. For a family $R$ of relations on $A$, we denote $\Pol R = \bigcap_{\rho \in R} \Pol \rho$. For any family $R$ of relations on $A$, $\Pol R$ is a clone on $A$, and it is a well-known fact that if $A$ is finite, then every clone on $A$ is of the form $\Pol R$ for some family $R$ of relations on $A$. For general background on clones, see \cite{Lau2006,PK,Szendrei}.

Let $\cl{C}$ be a fixed clone on $A$. For arbitrary operations $f \in \cl{O}_A^{(n)}$ and $g \in \cl{O}_A^{(m)}$ we say that 
\begin{itemize}
\item
$f$ is a \emph{$\cl{C}$-minor} of $g$, in symbols $f \subf[\cl{C}] g$, if $f = g \circ \vect{h}$ for some $\vect{h} \in (\cl C^{(n)})^m$; 
\item
$f$ and $g$ are \emph{$\cl{C}$-equivalent,} in symbols $f \fequiv[\cl{C}] g$, if $f \subf[\cl{C}] g$ and $g \subf[\cl{C}] f$.
\end{itemize}
The relation $\subf[\cl{C}]$ is a quasiorder on $\cl{O}_A$, $\fequiv[\cl{C}]$ is an equivalence relation on $\cl{O}_A$, ${\subf[\cl{C}]} \subseteq {\subf[\cl{C}']}$ if and only if $\cl{C} \subseteq \cl{C}'$, and ${\fequiv[\cl{C}]} \subseteq {\fequiv[\cl{C}']}$ whenever $\cl{C} \subseteq \cl{C}'$.

Denote by $\FF_A$ the set of clones $\cl{C}$ on $A$ that have the property that there are only a finite number of $\fequiv[\cl{C}]$-classes. As discussed in the Introduction, the set $\FF_A$ forms an order filter in the lattice of clones on $A$.

Throughout this paper, we will denote the three-element set $\{0, 1, 2\}$ by $\nset{3}$.  In the following sections, we will classify the maximal and submaximal clones on $\nset{3}$ according to whether they are members of the filter $\FF_{\nset{3}}$.

\section{Maximal clones on $\nset{3}$ and their intersections}

In this section we will present a classification of the maximal clones on $\nset{3}$ according to whether they are members of $\FF_\nset{3}$. This is a special case of the general classification of maximal clones on finite sets that we obtained in \cite{LSfilter}. For the sake of easy reference, and without proof, we will also collect here some of our earlier results from \cite{ULM,LSdiscr,LSfilter}, which will be useful in the following section where we classify the submaximal clones on $\nset{3}$ accordingly.

Rosenberg completely described the maximal clones on finite sets as follows.

\begin{theorem}[Rosenberg \cite{Rosenberg}]
\label{thm:Rosenberg}
Let $A$ be a finite set with $\card{A} \geq 2$. A clone on $A$ is maximal if and only if it is of the form $\Pol \rho$, where $\rho$ is a relation on $A$ of one of the following six types:
\begin{enumerate}[\rm (1)]
\item bounded partial order,
\item prime permutation,
\item nontrivial equivalence relation,
\item prime affine relation,
\item central relation,
\item $h$-regular relation.
\end{enumerate}
\end{theorem}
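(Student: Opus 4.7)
The plan is to invoke the Galois connection $\Pol$--$\operatorname{Inv}$ on a finite set (Bodnarchuk--Kalu\v{z}nin--Kotov--Romov and, independently, Geiger), which guarantees that every clone on a finite set $A$ is of the form $\Pol R$ for some family $R$ of finitary relations on $A$. Thus every maximal clone $\cl{M}$ is $\Pol R$ for some $R$, and by a standard minimality argument one may take $\cl{M} = \Pol \rho$ for a single relation $\rho$: replace $R$ by the full relational clone it generates, then pick $\rho$ of smallest arity with $\Pol \rho \supseteq \cl{M}$. The task reduces to classifying those single relations $\rho$ for which $\Pol \rho$ is a maximal clone.

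For the ``if'' direction I would check each of the six listed types separately. In each case, given any $f \in \cl{O}_A$ violating $\rho$, the goal is to show that $\langle \Pol \rho \cup \{f\} \rangle = \cl{O}_A$. The typical strategy is to combine $f$ with polymorphisms already present in $\Pol \rho$---sufficiently many unary operations, a near-unanimity or majority term, or a suitable semiprojection, depending on the type---to produce a Sheffer operation. The six types are precisely those for which such a generation is always possible: a bounded partial order supplies meet- and join-like behavior at its endpoints, a prime permutation yields a vector-space structure over $\mathbb{F}_p$, a prime affine relation encodes an affine $\mathbb{F}_p$-module, a nontrivial equivalence forces operations to respect blocks in a maximally rigid way, and the central and $h$-regular cases admit analogous but more delicate arguments.

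The hard direction is the ``only if'' statement, and it is the main obstacle. Assuming $\Pol \rho$ is maximal, the standard strategy is first to \emph{reduce} $\rho$: eliminate fictitious coordinates and replace $\rho$ by a suitably irredundant relation in the relational clone it generates, in a way that does not enlarge $\Pol \rho$. One then performs a careful case analysis on the arity, symmetry group, and combinatorial ``shape'' of the reduced $\rho$. In each case one either recognizes $\rho$ as belonging to one of Rosenberg's six families, or constructs a fresh invariant of $\Pol \rho$ whose polymorphism clone lies strictly between $\Pol \rho$ and $\cl{O}_A$, contradicting maximality. The $h$-regular case is the most delicate: its very formulation closed a gap in earlier classification attempts and requires a refined analysis of the actions of certain permutation groups on $A$. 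Since the full argument is long and technical, in practice I would follow the detailed expositions in \cite{PK,Lau2006,Szendrei} rather than attempt to reproduce it; this is also why the present paper cites \cite{Rosenberg} without proof.
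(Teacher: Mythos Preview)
The paper does not prove this theorem at all: Theorem~\ref{thm:Rosenberg} is stated with attribution to Rosenberg~\cite{Rosenberg} and used as background, with no argument given. So there is no ``paper's own proof'' to compare your proposal against, and you yourself observe this in your final sentence.

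As for the content of your sketch: it is a reasonable high-level outline of the standard route (Galois connection $\Pol$--$\operatorname{Inv}$, reduction to a single relation, then the long case analysis), but it is a plan rather than a proof. A couple of points are imprecise. Your reduction to a single $\rho$ (``pick $\rho$ of smallest arity with $\Pol\rho\supseteq\cl{M}$'') is not quite the right argument; the clean version is that $\cl{M}=\Pol R$ with $\cl{M}\neq\cl{O}_A$ forces some $\rho\in R$ with $\Pol\rho\neq\cl{O}_A$, and then $\cl{M}\subseteq\Pol\rho\subsetneq\cl{O}_A$ together with maximality gives $\cl{M}=\Pol\rho$. Also, the remark that ``a prime permutation yields a vector-space structure over $\mathbb{F}_p$'' conflates the prime-permutation case with the prime-affine case; the maximality arguments for these two types are quite different. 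None of this affects the comparison with the paper, since the paper simply cites the result.
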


Here a partial order is called \emph{bounded} if it has both a least and a greatest element. A \emph{prime permutation} is (the graph of) a fixed point free permutation on $A$ in which all cycles are of the same prime length. A \emph{prime affine relation} on $A$ is the graph of the ternary operation $x - y + z$ for some elementary abelian $p$-group $(A; +, -, 0)$ on $A$ ($p$ prime). An equivalence relation on $A$ is \emph{nontrivial} if it is neither the equality relation on $A$ nor the full relation on $A$.

An $r$-ary relation $\rho$ on $A$ is called \emph{totally reflexive} if $\rho$ contains all $r$-tuples from $A^r$ whose coordinates are not pairwise distinct, and it is called \emph{totally symmetric} if $\rho$ is invariant under any permutation of its coordinates. We say that $\rho$ is a \emph{central relation} on $A$ if $\emptyset \neq \rho \neq A^r$, $\rho$ is totally reflexive and totally symmetric and there exists an element $c \in A$ such that $\{c\} \times A^{r-1} \subseteq \rho$. The elements $c$ with this property are called the \emph{central elements} of $\rho$. Note that the arity $r$ of a central relation on $A$ satisfies $1 \leq r \leq \card{A} - 1$, and the unary central relations are just the nonempty proper subsets of $A$.

For an integer $h \geq 3$, a family $T = \{\theta_1, \dotsc, \theta_r\}$ ($r \geq 1$) of equivalence relations on $A$ is called \emph{$h$-regular} if each $\theta_i$ ($1 \leq i \leq r$) has exactly $h$ blocks, and for arbitrary blocks $B_i$ of $\theta_i$ ($1 \leq i \leq r$) the intersection $\bigcap_{1 \leq i \leq r} B_i$ is nonempty. To each $h$-regular family $T = \{\theta_1, \dotsc, \theta_r\}$ of equivalence relations on $A$ we associate an $h$-ary relation $\lambda_T$ on $A$ as follows:
\begin{multline*}
\lambda_T =
\{(a_1, \dotsc, a_h) \in A^h : \text{for each $i$, $a_1, \dotsc, a_h$ is not a transversal} \\
\text{for the blocks of $\theta_i$}\}.
\end{multline*}
Relations of the form $\lambda_T$ are called \emph{$h$-regular} (or \emph{$h$-regularly generated}) \emph{relations.} It is clear from the definition that $h$-regular relations are totally reflexive and totally symmetric.

The fact that there are exactly 18 maximal clones on $\nset{3}$ was first proved by Yablonsky \cite{Jablonskii}---this is a special case of Rosenberg's Theorem \ref{thm:Rosenberg}. The maximal clones on $\nset{3}$ are enumerated in Table \ref{table-maxcl3}, where $n_i(\cl{C})$ denotes the number of clones presented in line $i$. We also indicate for each clone whether it is a member of $\FF_\nset{3}$ (see Corollary \ref{cor:max3}). We will use the following notation. Let $\{a, b, c\} = \nset{3}$.
\begin{compactitem}
\item $\pi_3^{abc}$ denotes the 3-cycle $(abc)$, $\pi_3^{ab}$ denotes the transposition $(ab)$ on $\nset{3}$, $\pi_2^{ab}$ denotes the transposition $(ab)$ on the $2$-element set $\{a, b\}$.
\item $\epsilon_3^{ab|c}$ denotes the equivalence relation on $\nset{3}$ with $2$-element block $\{a, b\}$ and $1$-element block $\{c\}$.
\item $\leq_3^{abc}$ denotes the total order $a \leq b \leq c$ on $\nset{3}$; $\leq_2^{ab}$ denotes the total order $a \leq b$ on the $2$-element set $\{a, b\}$.
\item $\gamma_3^a$ denotes the unique central relation on $\nset{3}$ with central element $a$.
\item $\lambda_3$ denotes the unique affine relation on $\nset{3}$, $\lambda_2^{ab}$ denotes the unique affine relation on the $2$-element set $\{a, b\}$.
\item $\iota_3^3$ denotes the unique $3$-regular relation on $\nset{3}$.
\end{compactitem}

\begin{table}
\begin{tabular}{|c|l|c|c|}
\hline
$i$ & \multicolumn{1}{c|}{$\cl{C}$} & $n_i(\cl{C})$ & $\cl{C} \stackrel{?}{\in} \FF_\nset{3}$ \\
\hline
$1$ & $\Pol \{a\}$ & $3$ & yes \\
$2$ & $\Pol \{a, b\}$ & $3$ & yes \\
$3$ & $\Pol \pi_3^{012}$ & $1$ & yes \\
$4$ & $\Pol \epsilon_3^{ab|c}$ & $3$ & yes \\
$5$ & $\Pol \leq_3^{abc}$ & $3$ & no \\
$6$ & $\Pol \gamma_3^a$ & $3$ & yes \\
$7$ & $\Pol \lambda_3$ & $1$ & no \\
$8$ & $\Pol \iota_3^3$ & $1$ & yes \\
\hline
\end{tabular}

\medskip
\caption{The 18 maximal clones on the three-element set $\nset{3}$ and their membership in $\FF_\nset{3}$.}
\label{table-maxcl3}
\end{table}

\begin{theorem}[from \cite{ULM}]
\label{thm:linmon}
Let $A$ be a finite set with $\card{A} \geq 2$. If $\rho$ is a bounded partial order or a prime affine relation on $A$, then $\Pol \rho \notin \FF_A$.
\end{theorem}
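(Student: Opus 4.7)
Plan: In each case, the strategy is to exhibit an infinite family $\{f_n\}_{n \ge 1}$ of operations in $\cl{O}_A$ that are pairwise $\cl{C}$-inequivalent, by identifying a numerical invariant of $\fequiv[\cl{C}]$ that takes the value $n$ on $f_n$.

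For the prime affine case, I first pin down $\cl{C}=\Pol\rho$ explicitly. Setting $\vect{b}=\vect{0}$ in the preservation identity $f(\vect{a})-f(\vect{b})+f(\vect{c})=f(\vect{a}-\vect{b}+\vect{c})$ shows that $f-f(\vect{0})$ is a group homomorphism, so $\cl{C}$ consists exactly of the operations $c_0+\psi$, where $c_0\in A$ and $\psi\colon A^n\to A$ is a group homomorphism. In any $\mathbb{F}_p$-basis of the elementary abelian $p$-group $(A,+)$, every coordinate of such an operation is an $\mathbb{F}_p$-polynomial of total degree at most $1$, and this bound is preserved under reduction modulo $x_i^p-x_i$. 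Now fix a nonzero $\mathbb{F}_p$-linear functional $\phi\colon A\to\mathbb{F}_p$ and an element $u\in A$ with $\phi(u)\neq 0$, and define
\[
f_n(x_1,\dotsc,x_n)=\phi(x_1)\phi(x_2)\cdots\phi(x_n)\cdot u.
\]
The reduced polynomial degree of $f_n$ is exactly $n$. If $f_n=f_m\circ\vect{h}$ for some $\vect{h}=(h_1,\dotsc,h_m)\in(\cl{C}^{(n)})^m$, then $f_n=\phi(h_1)\phi(h_2)\cdots\phi(h_m)\cdot u$, a product of $m$ polynomials each of degree $\le 1$, of total reduced degree $\le m$. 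Thus $n\le m$, and by symmetry $f_n\fequiv[\cl{C}] f_m$ forces $n=m$.

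For the bounded partial order case, the polynomial-degree invariant is unavailable, since monotone operations on $A$ may have arbitrarily high polynomial degree. Instead, I reduce to the two-element Boolean substructure $\{\hat 0,\hat 1\}\subseteq A$ of bounds. Choose a monotone map $\sigma\colon A\to\{\hat 0,\hat 1\}$ that fixes $\hat 0$ and $\hat 1$, so that $\sigma\in\cl{C}$, and set
\[
f_n(x_1,\dotsc,x_n)=\sigma(x_1)\oplus\sigma(x_2)\oplus\cdots\oplus\sigma(x_n),
\]
where $\oplus$ denotes addition modulo $2$ on $\{\hat 0,\hat 1\}\cong\{0,1\}$. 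If $f_n=f_m\circ\vect{h}$ with $h_i\in\cl{C}^{(n)}$, then restricting both sides to $\{\hat 0,\hat 1\}^n$ (where $\sigma$ acts as the identity) writes $n$-variable Boolean XOR as XOR of the $m$ monotone Boolean functions $(\sigma\circ h_i)|_{\{\hat 0,\hat 1\}^n}$. The $m\times n$ matrix $B_{i,j}=(\sigma\circ h_i)(e_j)-(\sigma\circ h_i)(\vect{0})$ must then have every column of odd weight; monotonicity forces the values of $\sigma\circ h_i$ at higher joins $e_{j_1}\vee\cdots\vee e_{j_k}$ to dominate the corresponding joins of column entries, and propagating these constraints up the cube yields a contradiction whenever $m<n$. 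Hence $m\ge n$, and by symmetry $f_n\fequiv[\cl{C}] f_m$ implies $n=m$.

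The main obstacle is establishing the combinatorial lemma underlying the partial-order case, namely that $n$-variable Boolean XOR cannot be expressed as the XOR of fewer than $n$ monotone Boolean functions on $\{0,1\}^n$. Unlike the clean one-line degree count in the affine case, this requires an inductive propagation of monotonicity constraints to inputs of successively higher Hamming weight, using the odd column weights of $B$ together with reductions that peel off constant summands $h_i\equiv\hat 1$ to bring the problem back into a smaller instance of itself.
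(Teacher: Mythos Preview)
The paper does not prove this theorem; it is quoted without proof from \cite{ULM}. So there is no in-paper argument to compare against, and the question is simply whether your proposal stands on its own.

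Your affine case is complete and correct. The identification of $\Pol\rho$ with the clone of affine maps $c_0+\psi$ is standard, and the reduced $\mathbb{F}_p$-degree is a genuine invariant of $\fequiv[\cl{C}]$: since the variables of $\phi(x_1),\dotsc,\phi(x_n)$ are disjoint, the product is already reduced of degree $n$, while any expression $f_m\circ\vect h$ with affine $h_i$ has degree at most $m$.

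Your partial-order case is the right reduction, but you explicitly leave the key lemma unproved and describe its proof only as ``inductive propagation \dots up the cube.'' That is the gap. The lemma---that $n$-ary Boolean parity cannot be written as the XOR of fewer than $n$ monotone Boolean functions---is true, and the induction is cleaner than your sketch suggests. Suppose $x_1\oplus\cdots\oplus x_n=g_1\oplus\cdots\oplus g_m$ with each $g_i$ monotone. Evaluating at $\vect 0$ shows that evenly many $g_i$ satisfy $g_i(\vect 0)=1$; each such $g_i$ is constant $1$, so these cancel in pairs and may be discarded, leaving $m'\le m$ terms with $g_i(\vect 0)=0$. Now evaluate at $e_1$: oddly many $g_i(e_1)=1$, so some $g_1(e_1)=1$, and by monotonicity $g_1$ is constant $1$ on the face $\{x_1=1\}$. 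Restricting the identity to that face and cancelling the two $1$'s gives
\[
x_2\oplus\cdots\oplus x_n \;=\; \bigoplus_{i=2}^{m'} g_i|_{x_1=1},
\]
an expression of $(n-1)$-ary parity as an XOR of $m'-1\le m-1$ monotone functions on $\{0,1\}^{n-1}$. Induction yields $m-1\ge n-1$, hence $m\ge n$; the base case $n=1$ is immediate. With this lemma in hand, your restriction-to-$\{\hat 0,\hat 1\}$ argument goes through: each $\sigma\circ h_i|_{\{\hat 0,\hat 1\}^n}$ is monotone Boolean, and $f_n\fequiv[\cl{C}]f_m$ forces $n=m$.
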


The \emph{discriminator function} on $A$ is the ternary operation $t_A$ defined as follows:
\[
t_A(x, y, z) =
\begin{cases}
z, & \text{if $x = y$,} \\
x, & \text{otherwise.}
\end{cases}
\]
If a clone $\cl{C}$ on $A$ contains the discriminator function $t_A$, then $\cl{C}$ is called a \emph{discriminator clone.}
\begin{theorem}[from \cite{LSdiscr}]
\label{thm:discr}
If a clone $\cl{C}$ on a finite set $A$ contains the discriminator function $t_A$, then $\cl{C} \in \FF_A$. Moreover, the smallest clone on $A$ containing the discriminator function is a minimal member of $\FF_A$. Furthermore, if $\card{A} = 2$, then the members of $\FF_A$ are precisely the discriminator clones.
\end{theorem}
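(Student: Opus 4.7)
The theorem has three claims, and I would handle each in turn, leaning on Henno's result that two operations are $\cl{O}_A$-equivalent iff they have the same range.

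For (i), I note that $\cl{C} \subseteq \cl{O}_A$ implies $\fequiv[\cl{C}] \subseteq \fequiv[\cl{O}_A]$, so the relation $\fequiv[\cl{C}]$ partitions each of the finitely many range classes further. It therefore suffices to show that any two operations $f, g$ with the same nonempty range $B$ are already $\cl{C}$-equivalent. The plan is to fix a convenient canonical operation $g_B$ of arity $\card{B}$ with range $B$ and establish $f \fequiv[\cl{C}] g_B$ for every operation $f$ of arbitrary arity $n$ with range $B$. Writing $f = g_B \circ \vect{h}$ requires a tuple $\vect{h} \in (\cl{C}^{(n)})^{\card{B}}$ whose coordinates are, roughly, ``selectors'' that output the $i$-th element of $B$ on exactly those inputs where $f$ (or $g_B$) takes that value. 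The discriminator is precisely the tool that allows such case-by-case constructions to be carried out inside the clone, and the main technical obstacle is designing the canonical $g_B$ so that the resulting fiber description is encodable by nested discriminator terms.

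For (ii), the plan is to prove that any proper subclone $\cl{D} \subsetneq \langle t_A \rangle$ has infinitely many $\fequiv[\cl{D}]$-classes. Since $t_A \notin \cl{D}$, the Galois connection between clones and relations on $A$ supplies an invariant $\sigma$ preserved by $\cl{D}$ but not by $t_A$, and I would exploit $\sigma$ to define an infinite family of operations that are pairwise distinguished by their $\sigma$-types. The main difficulty is producing such a family uniformly for every lower cover of $\langle t_A \rangle$, or equivalently, understanding which invariants can witness the separation.

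For (iii), I would appeal to Post's lattice of clones on $\{0,1\}$. Since $\FF_{\{0,1\}}$ is a filter and (i) already yields the inclusion of discriminator clones in $\FF_{\{0,1\}}$, the reverse inclusion reduces to showing that each maximal non-discriminator clone on $\{0,1\}$ lies outside $\FF_{\{0,1\}}$. Post's lattice lists these finitely many clones explicitly, and for each one an explicit infinite family of pairwise inequivalent operations---for instance threshold functions with varying threshold in the monotone case, or parity functions of growing arity in the affine case---can be written down, completing the argument.
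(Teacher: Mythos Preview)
First, note that the present paper does not prove this theorem: it is quoted from \cite{LSdiscr} and is explicitly among the results collected ``without proof'' for later reference. So there is no in-paper argument to compare your proposal against.

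Independently of that, your plan for part (i) has a genuine gap. You propose to show that any two operations with the same range are already $\cl{C}$-equivalent whenever $t_A\in\cl{C}$. This is false for the smallest discriminator clone $\cl{C}=\langle t_A\rangle$, already on $A=\{0,1\}$. Since $t_A(x,x,x)=x$, every operation in $\langle t_A\rangle$ is idempotent, so $\cl{C}^{(1)}=\{\id\}$. Take the unary operations $f=\id$ and $g=\neg$; both have range $\{0,1\}$. For $g\subf[\cl{C}] f$ one would need $g=f\circ h=h$ with $h\in\cl{C}^{(1)}=\{\id\}$, forcing $g=\id$, a contradiction. Hence $f\not\fequiv[\cl{C}] g$ although $\range f=\range g$, and the sufficient condition you set out to prove simply does not hold. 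The finiteness in \cite{LSdiscr} comes from a finer invariant than the bare range (roughly, one must also record how $f$ behaves on the subuniverses of $(A;t_A)$, i.e., on all subsets of $A$), and the discriminator is used to realise inner maps respecting this richer data rather than to collapse each range class to a single $\fequiv[\cl{C}]$-class. Your sketches for (ii) and (iii) are reasonable as plans, but the argument for (i) as written cannot be completed.
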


\begin{theorem}[from \cite{LSfilter}]
\label{thm:eps}
Let $A$ be a finite set, and let $E$ be a set of equivalence relations on $A$, $\Gamma$ a set of permutations on $A$, and $\Sigma$ a set of nonempty subsets of $A$. The clone $\Pol (E, \Gamma, \Sigma)$ is a member of $\FF_A$ if and only if
\begin{compactenum}[\rm \quad (a)]
\item $E$ is a chain (i.e., any two members of $E$ are comparable), and
\item $\Gamma \subseteq \Pol E$.
\end{compactenum}
\end{theorem}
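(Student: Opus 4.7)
The plan is to prove the two implications separately. For the only-if direction, I would argue the contrapositive: if either (a) or (b) fails, I construct an infinite family of pairwise $\fequiv[\cl{C}]$-inequivalent operations inside $\cl{C} = \Pol(E, \Gamma, \Sigma)$, whence $\cl{C} \notin \FF_A$. For the if direction, I assume (a) and (b) and attach to each $f \in \cl{C}$ a finite combinatorial invariant that determines its $\fequiv[\cl{C}]$-class.

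For the only-if direction, suppose first that $E$ is not a chain, and pick incomparable $\theta, \theta' \in E$. Then there exist pairs witnessing $\theta \setminus \theta'$ and $\theta' \setminus \theta$, and I would produce, for each $n$, an $n$-ary operation $f_n \in \cl{C}$ whose values on carefully chosen tuples record $n$ independent bits about $\theta$-blocks and $\theta'$-blocks; since no $\cl{C}$-minor substitution can combine these two incomparable partitions into a single coarser structure, the $f_n$ land in distinct $\fequiv[\cl{C}]$-classes. Now suppose $\Gamma \not\subseteq \Pol E$, and pick $\gamma \in \Gamma$ and $\theta \in E$ with $\gamma \notin \Pol \theta$. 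Since every operation in $\cl{C}$ commutes with $\gamma$ and simultaneously respects $\theta$ while $\gamma$ itself mangles $\theta$-blocks, I would exploit this mismatch to build an analogous infinite family, for instance by tracking the $\theta$-types of the $\langle \gamma \rangle$-orbits of carefully placed inputs and showing that the resulting patterns cannot be reproduced at lower arity by any $\cl{C}$-substitution.

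For the if direction, write the chain as $\theta_1 \subseteq \cdots \subseteq \theta_k$. Given $f \in \cl{C}^{(n)}$, I would attach to $f$ the data consisting of $\range f$ together with, for each $\theta_i$, the family of $\theta_i$-classes that intersect $\range f$, all considered modulo the action of $\Gamma$ (which descends to each quotient $A/\theta_i$ thanks to (b)) and restricted by the subsets in $\Sigma$. I would then show that every $f \in \cl{C}$ is $\fequiv[\cl{C}]$-equivalent to a canonical operation depending only on this data, by producing explicit unary witnesses in $\cl{C}^{(1)}$ to substitute for the variables of $f$ in order to normalize it; the chain assumption (a) is precisely what allows such witnesses to be built coherently across all levels of the chain, and (b) ensures that the witnesses remain in $\cl{C}$ after accounting for the $\Gamma$-constraints. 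Since $A$ is finite, the number of possible data is finite.

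The main obstacle is case (b) of the only-if direction: translating the failure of $\gamma$ to preserve $\theta$ into infinitely many $\fequiv[\cl{C}]$-classes is subtle, because operations in $\cl{C}$ are simultaneously constrained by several compatible relations, and one must verify that no $\cl{C}$-substitution can collapse the constructed family. On the if side, the routine but technical work is verifying that $\cl{C}^{(1)}$ is rich enough to provide all the required selector operations, which again relies crucially on the chain structure together with the compatibility condition (b).
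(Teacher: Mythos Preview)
The paper does not contain a proof of this theorem; it is quoted verbatim from the earlier paper \cite{LSfilter} as a tool, with the explicit remark that results in that section are collected ``for the sake of easy reference, and without proof.'' So there is nothing in the present paper to compare your proposal against.

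That said, a few comments on your outline itself. The overall shape (contrapositive for necessity, a finite $\cl{C}$-invariant for sufficiency) is the natural one, but the invariant you describe for the if-direction looks too coarse. You propose to record $\range f$ together with, for each $\theta_i$, the set of $\theta_i$-classes meeting $\range f$. But the latter is already determined by $\range f$, so your invariant collapses to (essentially) $\range f$ modulo $\Gamma$. That is the Henno invariant for the full clone $\cl{O}_A$; for a proper subclone such as $\Pol\theta$ with $\theta$ a nontrivial equivalence, it cannot suffice. For instance, on a four-element set with $\theta$ having two two-element blocks, a surjective operation whose restriction to each $\theta^n$-block is surjective and one whose restriction to some $\theta^n$-block hits only a single $\theta$-class have the same range but are not $\Pol\theta$-equivalent. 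The invariant that actually works records, roughly, the \emph{set of systems of restricted ranges} $\{\,(\range f|_{B})_{B}\,\}$ as $B$ runs over the blocks of the various $\theta_i^n$, organised compatibly along the chain; the chain hypothesis is what keeps this finite and coherent.

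Two minor points: the inequivalent operations you build for the only-if direction need not lie in $\cl{C}$, since $\fequiv[\cl{C}]$ is an equivalence on all of $\cl{O}_A$; and in your if-direction normalisation you speak of ``unary witnesses in $\cl{C}^{(1)}$,'' but in general the inner substitutions must be $n$-ary members of $\cl{C}$, not merely unary ones.
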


\begin{theorem}[from \cite{LSfilter}]
If $\rho$ is an $r$-ary central relation on a $k$-element set $A$ such that $2 \leq r \leq k-2$ ($k \geq 4$), then $\Pol \rho \notin \FF_A$.
\end{theorem}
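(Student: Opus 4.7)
The plan is to exhibit an infinite family of operations on $A$ that are pairwise $\fequiv[\Pol\rho]$-inequivalent. First I would use the hypotheses to extract the ingredients: since $\rho$ is totally reflexive and $\rho\neq A^r$, there exists an $r$-tuple $(a_1,\dotsc,a_r)\notin\rho$ whose entries are pairwise distinct (and therefore noncentral). The hypothesis $r\leq k-2$ guarantees $\card{A\setminus\{a_1,\dotsc,a_r\}}\geq 2$, so I can additionally fix a central element $c$ of $\rho$ together with some $d\in A\setminus\{a_1,\dotsc,a_r,c\}$.

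For each $n\geq 1$ I would then construct an $n$-ary two-valued operation $f_n\colon A^n\to\{c,d\}$ taking value $d$ on a carefully designed subset $D_n\subseteq A^n$ and value $c$ elsewhere. Every such two-valued operation is automatically in $\Pol\rho$: for any coordinate-wise $\rho$-related input $(\vect{y}_1,\dotsc,\vect{y}_r)\in\rho^n$, the image $r$-tuple $(f_n(\vect{y}_1),\dotsc,f_n(\vect{y}_r))$ either contains the central value $c$ (and thus lies in $\rho$ by centrality) or is the diagonal tuple $(d,\dotsc,d)$ (and thus lies in $\rho$ by total reflexivity). This automatic preservation of $\rho$ leaves me complete freedom in choosing $D_n$ to serve the separation argument.

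The decisive step is to pin down a numerical invariant $\iota$ on operations that is monotone along $\subf[\Pol\rho]$ (hence constant on $\fequiv[\Pol\rho]$-classes) and to arrange the $D_n$ so that $\iota(f_n)=n$. Since $f\subf[\Pol\rho]g$ translates into $f^{-1}(d)=\vect{h}^{-1}(g^{-1}(d))$ for some $\vect{h}=(h_1,\dotsc,h_m)\in((\Pol\rho)^{(n)})^m$, the invariant should be a combinatorial feature of the preimage set $f^{-1}(d)\subseteq A^n$ that cannot be inflated along such pullbacks. A natural candidate is the maximal size $j$ of a \emph{$\rho$-witnessing configuration} inside $f^{-1}(d)$ modeled on the forbidden tuple $(a_1,\dotsc,a_r)$---for instance, the largest $j$ for which $f^{-1}(d)$ contains $r$ pairwise distinct elements whose component-wise pattern, on $j$ of the $n$ coordinates, reproduces the shape of $(a_1,\dotsc,a_r)$ while lying column-wise in $\rho$ on the remaining coordinates.

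The main obstacle is proving the $\subf[\Pol\rho]$-monotonicity of $\iota$: if $f=g\circ\vect{h}$ and $f^{-1}(d)$ admits a $\rho$-witness of size $n$, one has to extract a $\rho$-witness of size at least $n$ in $g^{-1}(d)$ by applying $\vect{h}$ coordinatewise. This demands a careful analysis of how tuples of $\Pol\rho$-operations act on $\rho$-structured subsets, and is precisely where the assumption $r\leq k-2$ is used: the existence of the two "free" elements $c,d$ outside $\{a_1,\dotsc,a_r\}$ supplies enough combinatorial slack for the $\rho$-witnesses in $A^n$ to survive composition with arbitrary $\Pol\rho$-operations. When instead $r=k-1$ this slack is lost and the argument correctly fails to apply, consistent with the distinct behaviour of $(k-1)$-ary central relations.
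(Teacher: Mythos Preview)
This theorem is quoted in the paper from \cite{LSfilter} without proof, so there is no in-paper argument to compare against. Evaluating your proposal on its own merits: the overall shape (build two-valued $f_n\colon A^n\to\{c,d\}$ and separate them by a numerical invariant) is reasonable, and your observation that any $\{c,d\}$-valued operation automatically preserves $\rho$ is correct and pleasant. But what you have written is a plan, not a proof: you never actually specify the sets $D_n$, and the crucial monotonicity of $\iota$ is asserted rather than established. Worse, the natural attempt to prove it breaks down. Suppose $f=g\circ\vect{h}$ with $\vect{h}\in(\cl{C}^{(n)})^m$ and $\vect{x}_1,\dotsc,\vect{x}_r\in f^{-1}(d)$ form a ``witness of size $j$,'' i.e.\ the $r\times n$ matrix with rows $\vect{x}_i$ has $j$ columns equal to $(a_1,\dotsc,a_r)\notin\rho$. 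The images $\vect{y}_i=\vect{h}(\vect{x}_i)$ do lie in $g^{-1}(d)$, but column $\ell$ of the image matrix is $(h_\ell(\vect{x}_1),\dotsc,h_\ell(\vect{x}_r))$, and the hypothesis $h_\ell\in\Pol\rho$ constrains this output \emph{only when all input columns lie in $\rho$}---exactly what your witness violates. Nothing prevents every output column from landing in $\rho$ (for instance, essentially unary $h_\ell$ can identify the $a_i$), so the inequality $\iota(f)\le\iota(g)$ you need has no support. Your appeal to ``combinatorial slack'' coming from $r\le k-2$ does not address this gap; you have not said what the free element $d$ actually does for the monotonicity step beyond supplying a second output value.

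If you look at the proofs of Lemmas~\ref{ClonesCase26}, \ref{ClonesCase34}, \ref{ClonesCase35}, \ref{ClonesCase40} in this paper, you will see the style of argument that typically succeeds for statements of this kind: one writes down $f_n$ explicitly, assumes $f_n=f_m\circ\vect{h}$ for some $\vect{h}$ in the clone, and then tracks how $\vect{h}$ is forced to act on a small family of designated input tuples, eventually deriving a numerical contradiction between $n$ and $m$. A complete proof of the present theorem almost certainly has that shape; the abstract invariant you propose would have to be replaced by, or at least grounded in, such a concrete tuple-chasing argument.
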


\begin{theorem}[from \cite{LSfilter}]
\label{thm:centralk-1}\label{thm:centralk-1subset}\label{thm:centralk-1eqrel}
Let $A$ be a finite set with $k$ elements. Let $\rho$ be a $(k-1)$-ary central relation on $A$, and let $c$ be the unique central element of $\rho$.
\begin{compactenum}[\rm (i)]
\item $\Pol(\rho, \{c\}) \in \FF_A$.
\item If $S$ is a nonempty proper subset of $A$ such that $S \neq \{c\}$, then $\Pol(\rho, S) \notin \FF_A$.
\item If $E$ is a nontrivial equivalence relation on $A$, then $\Pol(\rho, E) \notin \FF_A$.
\end{compactenum}
\end{theorem}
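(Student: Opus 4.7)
My plan is to handle the three parts of the theorem separately, since (i) is a positive finiteness result while (ii) and (iii) require the construction of infinite families of pairwise $\cl{C}$-inequivalent operations.

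For part (i), I would first exploit the uniqueness of $c$ as a central element to pin down $\rho$: its complement in $A^{k-1}$ consists exactly of the $(k-1)!$ tuples whose coordinates form a permutation of $A\setminus\{c\}$. Hence an $n$-ary operation $f$ preserves $\rho$ if and only if, whenever the output tuple $(f(\vect{a}_1),\dotsc,f(\vect{a}_{k-1}))$ is a permutation of $A\setminus\{c\}$, some column of the input matrix is already such a permutation. Next I would record the key features of $\cl{C}=\Pol(\rho,\{c\})$: the only constant operation in $\cl{C}$ is the $c$-valued one (since $\{c\}$-preservation rules out any other constant), the unary part consists of all maps $A\to A$ fixing $c$, and each operation of arity $\geq 2$ is tightly constrained in its ability to reach permutations of $A\setminus\{c\}$ at the output. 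Using this plentiful unary part together with the range bound coming from the $\subf[\cl{C}]$-preorder, I would argue that every operation is $\cl{C}$-equivalent to one in a finite list of canonical forms, indexed by its range together with a bounded amount of combinatorial data about its fiber over $c$. The main obstacle I expect is verifying that this invariant is \emph{complete}, i.e., that two operations agreeing on it can be mutually obtained from one another by $\cl{C}$-substitutions; this requires producing the explicit substitutions, for which the permutation-preservation characterisation above together with the unary flexibility should suffice.

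For part (ii), fix a nonempty proper subset $S\subsetneq A$ with $S\neq\{c\}$; then $S\setminus\{c\}$ is nonempty, so the clone $\Pol(\rho,S)$ has some unary fixed point $a\neq c$, and if $c\notin S$ it even contains the constant operation with value $a$. The plan is to exploit this extra freedom, which is absent in part (i), to build an infinite family $\{f_n\}_{n\geq 1}\subseteq\Pol(\rho,S)$ of pairwise inequivalent operations, parametrised so that each $f_n$ has a distinct ``column-pattern signature'' under the permutation-preservation condition derived above. Pairwise inequivalence is then verified by showing that $f_n\subf[\cl{C}] f_m$ with $m\neq n$ would force the substituted components $h_i\in\cl{C}$ to produce column-patterns that are forbidden by $\rho$-preservation or incompatible with the fixing of $S$.

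For part (iii), the same scheme applies with the nontrivial equivalence $E$ playing the role of $S$. Polymorphisms of $E$ must send $E$-related inputs to $E$-related outputs coordinatewise, and since $E$ has at least two blocks with at least one of size $\geq 2$, I would construct an infinite family $\{g_n\}_{n\geq 1}\subseteq\Pol(\rho,E)$ whose $E$-block patterns in increasing arity yield pairwise distinct invariants under $\cl{C}$-substitution. The main obstacle across (ii) and (iii) is the clean verification of inequivalence; once the families are chosen carefully, this reduces to a combinatorial argument about which column-patterns or $E$-block patterns can be realised as columns of $\cl{C}$-substitution tuples, parallel to what is needed in (ii).
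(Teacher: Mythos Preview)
The paper does not contain a proof of this theorem. It is quoted verbatim from the reference \cite{LSfilter} in the preliminary Section~3, where the authors explicitly write that they ``collect here some of our earlier results from \cite{ULM,LSdiscr,LSfilter}'' \emph{without proof}, for later use in classifying the submaximal clones. So there is no ``paper's own proof'' to compare your attempt against; any comparison would have to be made with the arguments in \cite{LSfilter} itself, not with the present paper.

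As for the content of your proposal: it is a plan rather than a proof, and the level of detail is far short of what is needed. In part~(i) your identification of $\rho$ via its complement (the permutations of $A\setminus\{c\}$) is correct and is indeed the key structural observation, but the phrase ``every operation is $\cl{C}$-equivalent to one in a finite list of canonical forms, indexed by its range together with a bounded amount of combinatorial data about its fiber over $c$'' is doing all the work while saying almost nothing concrete. You have not specified the invariant, let alone shown it is complete. In parts~(ii) and~(iii) you only announce that you will build families $\{f_n\}$ and $\{g_n\}$ with distinguishable ``column-pattern signatures'' or ``$E$-block patterns'', without writing down a single such family or verifying a single inequivalence. These constructions are the entire substance of such results, and experience with the analogous lemmas proved in Section~4 of the present paper (e.g.\ Lemmas~\ref{ClonesCase26}, \ref{ClonesCase34}, \ref{ClonesCase35}, \ref{ClonesCase40}) shows that the choice of family and the combinatorial verification are delicate and case-specific. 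Until you actually exhibit the families and carry out the inequivalence arguments, what you have is an outline of intentions, not a proof.
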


\begin{theorem}[from \cite{LSfilter}]
Let $A$ be a finite set with $k$ elements. If $\rho$ is an $h$-regular relation on $A$ with $h < k$, then $\Pol \rho \notin \FF_A$.
\end{theorem}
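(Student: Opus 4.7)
My plan is to prove $\Pol \rho \notin \FF_{\nset{A}}$, with $\rho = \lambda_T$ for $T = \{\theta_1, \dotsc, \theta_r\}$, by exhibiting infinitely many pairwise $\fequiv[\Pol \rho]$-inequivalent polymorphisms of $\rho$.

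The starting observation is that total reflexivity of $\rho$ implies every operation $f \in \cl{O}_A$ with $\card{\range f} < h$ belongs to $\Pol \rho$: any $h$ of its outputs are drawn from a set of size less than $h$ and must include a repetition, so the output tuple lies in $\rho$. Since $h < k$, each subset $S \subsetneq A$ with $\card{S} \leq h-1$ yields a large collection of polymorphisms, namely all operations with range in $S$. However, this small-range reservoir alone will not yield the required infinite hierarchy: a short argument shows that any operation with range of size less than $h$ is $\fequiv[\Pol \rho]$-equivalent to a unary operation, since its kernel can be realised by an auxiliary polymorphism with the same small range (which again lies in $\Pol \rho$). So the small-range part of $\Pol \rho$ collapses to finitely many equivalence classes.

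The infinite hierarchy must therefore come from polymorphisms whose range contains at least $h$ elements. My proposal is to construct, for each $n$, an operation $f_n \in \Pol \rho$ of arity $n$ with large range --- for instance, a carefully perturbed projection engineered so that on coordinatewise non-$\theta_i$-transversal inputs the output is again never a $\theta_i$-transversal --- and to attach to $f_n$ a numerical invariant $\mu$ compatible with the quasi-order $\subf[\Pol \rho]$ with $\mu(f_n) \to \infty$. A natural candidate for $\mu$ measures how finely the kernel partition of $f_n$ refines the product partitions induced by the $\theta_i$'s, or equivalently the minimum arity of any $\fequiv[\Pol \rho]$-equivalent operation.

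The principal obstacle is the combined task of constructing such $f_n$'s and of designing $\mu$. Verifying that the candidate $f_n$ truly preserves $\rho$ requires a careful case analysis of how it acts on coordinatewise non-transversal inputs. Designing $\mu$ is the subtler step: because $\subf[\Pol \rho]$ permits substitution by arbitrary polymorphisms of $\rho$ --- including every small-range operation --- simple invariants such as essential arity or fibre sizes wash out, and $\mu$ must exploit the specific block structure of the equivalences in $T$ to retain monotonicity under $\subf[\Pol \rho]$ while still separating the $f_n$. Establishing this monotonicity is, as in similar results of the preceding theorems in this section, the technical heart of the argument.
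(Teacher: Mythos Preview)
The paper does not contain a proof of this theorem; it is quoted from \cite{LSfilter} without proof, as one of several results collected in Section~3 for later reference. So there is no proof in the present paper to compare your attempt against.

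That said, your proposal is not a proof either --- it is a strategic outline in which, by your own admission, ``the principal obstacle'' and ``the technical heart of the argument'' are left undone. You correctly observe that operations with range of size less than $h$ are automatically in $\Pol\rho$ by total reflexivity, and you are right that these collapse to finitely many $\fequiv[\Pol\rho]$-classes, so the infinite family must consist of operations with range of size at least $h$. The general shape you propose --- construct explicit $f_n \in \Pol\rho$ and show they are pairwise inequivalent --- is exactly the method used throughout this paper for negative results (Lemmas~\ref{ClonesCase26}, \ref{ClonesCase34}, \ref{ClonesCase35}, \ref{ClonesCase40}). But you neither construct the $f_n$ nor define the invariant $\mu$; describing $f_n$ as ``a carefully perturbed projection'' and $\mu$ as measuring ``how finely the kernel partition of $f_n$ refines the product partitions induced by the $\theta_i$'s'' is not enough to evaluate, let alone verify. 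In the paper's analogous lemmas no abstract monotone invariant is used at all: one argues directly, by contradiction, that no $\vect{h} \in (\Pol\rho)^m$ can realise $f_n = f_m \circ \vect{h}$, and that argument depends entirely on the specific combinatorics of the chosen $f_n$. Until you supply an actual construction of the $f_n$, check that each preserves $\rho$, and carry out the inequivalence argument, there is no proof here --- only a statement of what a proof would need to accomplish.
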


Denote by $\mathcal{T}_A$ the full transformation monoid on $A$, and denote by $\mathcal{T}_A^{-}$ the submonoid of $\mathcal{T}_A$ consisting of $\id_A$ and all non-permutations. It is well-known (see \cite{Burle} and \cite{Slupecki}) that for a finite base set $A$ with $k \geq 2$ elements, there are exactly $k + 1$ clones $\cl{C}$ such that $\cl{C}^{(1)} = \mathcal{T}_A$ and they form a chain
\[
\langle \cl{O}_A^{(1)} \rangle = \cl{B}_0 \subset \cl{B}_1 \subset \cl{B}_2 \subset \dots \subset \cl{B}_{k-1} \subset \cl{B}_k = \cl{O}_A.
\]
The clones $\cl{B}_i$ are defined as follows. For $2 \leq i \leq k$, $\cl{B}_i$ consists of all essentially at most unary functions and all functions whose range contains at most $i$ elements. $\cl{B}_1$ consists of all essentially at most unary functions and all \emph{quasilinear} functions, i.e., functions having the form $g \bigl(h_1(x_1) \oplus \dotsb \oplus h_n(x_n)\bigr)$ where $h_1, \dotsc, h_n \colon A \to \{0,1\}$, $g \colon \{0,1\} \to A$ are arbitrary mappings and $\oplus$ denotes addition modulo $2$. $\cl{B}_{k-1}$ is referred to as \emph{S\l{}upecki's clone,} and it is equal to $\Pol \rho$ for the unique $k$-regular relation $\rho$ on $A$.

Szabó extended these results and showed that if $M$ is a transformation monoid on $A$ that contains $\mathcal{T}_A^{-}$, then there are exactly $k$ clones $\cl{C}$ on $A$ such that $\cl{C}^{(1)} = M$, and they form a chain
\[
\langle M \rangle \subset \mathcal{B}_1(M) \subset \mathcal{B}_2(M) \subset \dots \subset \mathcal{B}_{k-1}(M),
\]
where each $\cl{B}_i(M)$, $1 \leq i \leq k-1$ arises from $\cl{B}_i$ by omitting all operations depending on at most one variable which are outside of $\langle M \rangle$ (see \cite{Szendrei}).

\begin{theorem}[from \cite{LSfilter}]
\label{thm:Bk-1}
\label{thm:Bk-2}
If $\cl{C}$ is a clone on a $k$-elements set $A$ ($k \geq 3$) such that $\mathcal{T}_A^{-} \subseteq \cl{C}$, then $\cl{C} \in \FF_A$ if and only if $\cl{B}_{k-1}(\mathcal{T}_A^{-}) \subseteq \cl{C}$.
\end{theorem}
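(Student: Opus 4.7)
The plan is to prove the two directions separately.

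\emph{Sufficiency.} Because $\FF_A$ is an order filter in the clone lattice, it suffices to show that $\cl{C}_0 := \cl{B}_{k-1}(\mathcal{T}_A^-)$ itself belongs to $\FF_A$. Note that $\cl{C}_0$ contains every essentially at most unary operation arising from $\mathcal{T}_A^-$, as well as every operation on $A$ whose range has size at most $k-1$; in particular all constants and all non-bijective unary maps are available inside $\cl{C}_0$. The goal is to exhibit finitely many $\fequiv[\cl{C}_0]$-representatives by partitioning $\cl{O}_A$ according to the essential arity and the range size of $f$. Essentially at most unary operations yield at most $k^k$ classes, one per underlying unary map. For $f$ essentially at least binary with $|\range f|\leq k-1$, a S\l{}upecki-type collapse---using constants to kill inessential variables and non-permutations to shrink images---exhibits $f$ as $\fequiv[\cl{C}_0]$-equivalent to a canonical low-arity operation determined by $\range f$, yielding finitely many classes. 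The surjective essentially-at-least-binary case is handled similarly, exploiting the abundance of range-$\leq k-1$ operations in $\cl{C}_0$ to reduce $f$ to one of finitely many representatives indexed by coarse kernel data.

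\emph{Necessity.} Assume $\mathcal{T}_A^-\subseteq\cl{C}$ yet $\cl{B}_{k-1}(\mathcal{T}_A^-)\not\subseteq\cl{C}$, and set $M:=\cl{C}^{(1)}$, so that $\mathcal{T}_A^-\subseteq M\subseteq\mathcal{T}_A$. By the Burle--Szab\'o result recalled above, the clones with unary part $M$ form the chain $\langle M\rangle\subset\cl{B}_1(M)\subset\cdots\subset\cl{B}_{k-1}(M)$; since $\cl{B}_{k-1}(\mathcal{T}_A^-)\subseteq\cl{B}_{k-1}(M)$ but $\cl{B}_{k-1}(\mathcal{T}_A^-)\not\subseteq\cl{C}$, the clone $\cl{C}$ lies strictly below $\cl{B}_{k-1}(M)$, hence $\cl{C}\subseteq\cl{B}_{k-2}(M)\subseteq\cl{B}_{k-2}(\mathcal{T}_A)$. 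Therefore every operation in $\cl{C}$ is either essentially at most unary or has range of size at most $k-2$, and no essentially-at-least-binary operation of range $\geq k-1$ belongs to $\cl{C}$. I would then construct an infinite family $(f_n)_{n\geq 2}$ of pairwise $\fequiv[\cl{C}]$-inequivalent operations, each $f_n$ essentially $n$-ary of range exactly $k-1$, chosen ``generically'' in the sense that restricting any single coordinate to a proper subset of $A$ strictly decreases the range of $f_n$.

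The main obstacle is verifying the pairwise inequivalence. Suppose $f_n = f_m\circ\vect{h}$ for some $\vect{h}=(h_1,\ldots,h_m)\in(\cl{C}^{(n)})^m$ with $n>m$. Each $h_i$ is either essentially at most unary of the form $\phi_i(x_{j_i})$ with $\phi_i\in M$, or has range of size at most $k-2$. If every $h_i$ is essentially unary, then $f_m(\phi_1(x_{j_1}),\ldots,\phi_m(x_{j_m}))$ depends essentially on at most $m<n$ variables, contradicting essential $n$-arity of $f_n$. If some $h_i$ has range contained in a $(k-2)$-subset $R_i\subsetneq A$, then by the genericity of $f_m$ the composition $f_m\circ\vect{h}$ takes values in a set strictly smaller than $\range f_m$, so its range has size at most $k-2$, contradicting $|\range f_n|=k-1$. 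The delicate point is handling the possible presence of permutations in $M$, which can twist variables without shrinking range; choosing the $f_n$ without nontrivial coordinate symmetries, and with kernels that detect the permutation action, circumvents this.
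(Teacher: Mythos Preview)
The paper does not prove this theorem at all: it is quoted verbatim from the companion paper \cite{LSfilter} (note the attribution ``from [LSfilter]'' in the theorem header), so there is no in-paper proof to compare your attempt against.

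Regarding your sketch itself: the necessity direction has the right architecture---using the Burle--Szab\'o chain to force $\cl{C}\subseteq\cl{B}_{k-2}(M)$, and then exhibiting an infinite family of essentially-$n$-ary operations that cannot be factored through fewer variables using only essentially unary or small-range inner maps---and this is essentially the approach taken in \cite{LSfilter}. However, your ``genericity'' requirement (that restricting any single coordinate of $f_m$ to a proper subset strictly shrinks the range) is stronger than what can actually be achieved for arbitrary $k$, and in the case $k=3$ it is outright impossible for an operation with two-element range, so the construction of the $f_n$ needs more care than you indicate. The sufficiency direction is too vague to assess: phrases like ``S\l{}upecki-type collapse'' and ``coarse kernel data'' name the desired outcome rather than supply the mechanism, and the surjective case in particular requires a genuine argument since $\cl{C}_0$ contains no surjective essentially-$\geq 2$-ary operations to serve as inner maps.
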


\begin{theorem}[from \cite{LSfilter}]
\label{basic_props4}
Let $\rho$ be a relation on a finite set $A$, let $B$ be a nonempty subset of $A$, and let $\rho_B$ be the restriction of $\rho$ to $B$. If $\Pol \rho \in \FF_A$, then $\Pol \rho_B \in \FF_B$.
\end{theorem}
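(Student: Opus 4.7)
The plan is to argue by contrapositive: assuming $\Pol \rho_B \notin \FF_B$, I will exhibit infinitely many $\fequiv[\Pol \rho]$-classes on $\cl{O}_A$, contradicting $\Pol \rho \in \FF_A$. The key idea is to lift each operation $f$ on $B$ to an operation $\tilde f$ on $A$ whose value detects membership in $B^n$, so that any $\Pol \rho$-minor relation between two lifts forces a $\Pol \rho_B$-minor relation between the originals.

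To start, I dispose of degeneracies: if $A = B$ there is nothing to prove, and if $\rho_B = \emptyset$ then $\Pol \rho_B = \cl{O}_B$, which belongs to $\FF_B$ because $B$ is finite. So I may assume $A \setminus B \neq \emptyset$ and fix an element $c \in A \setminus B$. For each $f \in \cl{O}_B^{(n)}$ I define $\tilde f \in \cl{O}_A^{(n)}$ by
\[
\tilde f(\vect{a}) =
\begin{cases}
f(\vect{a}) & \text{if $\vect{a} \in B^n$,} \\
c & \text{otherwise.}
\end{cases}
\]
The crucial property is that $\tilde f(\vect{a}) \in B$ if and only if $\vect{a} \in B^n$.

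The heart of the proof is the following claim: if $f \in \cl{O}_B^{(n)}$ and $g \in \cl{O}_B^{(m)}$ satisfy $\tilde f \subf[\Pol \rho] \tilde g$, then $f \subf[\Pol \rho_B] g$. Suppose $\tilde f = \tilde g \circ \vect h$ with $\vect h = (h_1, \dotsc, h_m) \in (\Pol \rho)^m$, each $h_i$ of arity $n$. For any $\vect{a} \in B^n$, the value $\tilde f(\vect{a}) = f(\vect{a})$ lies in $B$, hence $\tilde g(\vect h(\vect{a})) \in B$, which forces $\vect h(\vect{a}) \in B^m$ by the detection property. Each $h_i$ thus maps $B^n$ into $B$, and since $h_i \in \Pol \rho$, its restriction to $B^n$ preserves $\rho_B = \rho \cap B^r$, i.e., lies in $\Pol \rho_B$. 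Restricting the composition identity to $B^n$ yields $f = g \circ (h_1|_{B^n}, \dotsc, h_m|_{B^n})$, establishing $f \subf[\Pol \rho_B] g$.

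By symmetry the claim gives $\tilde f \fequiv[\Pol \rho] \tilde g \Rightarrow f \fequiv[\Pol \rho_B] g$, so $\Pol \rho_B$-inequivalent operations on $B$ have $\Pol \rho$-inequivalent lifts on $A$. Taking representatives of infinitely many $\fequiv[\Pol \rho_B]$-classes on $\cl{O}_B$ and lifting produces infinitely many $\fequiv[\Pol \rho]$-classes on $\cl{O}_A$, contradicting $\Pol \rho \in \FF_A$. I do not anticipate a significant obstacle; the one point requiring care is the choice of $c \in A \setminus B$ to encode $B$-membership into the range of $\tilde f$, after which the argument is a direct verification.
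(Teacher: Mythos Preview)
The paper does not actually prove this theorem; it is quoted without proof from \cite{LSfilter}. Your argument is correct and is the natural proof: the lift $\tilde f$ encodes membership in $B^n$ via its range, so any $\Pol\rho$-minor relation between lifts restricts to a $\Pol\rho_B$-minor relation on $B$, and the contrapositive follows. The handling of the degenerate cases is fine (and in fact the case $\rho_B=\emptyset$ need not be treated separately, since the main argument goes through regardless).
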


These results can be summarized in the following two theorems about maximal clones on $A$ and their intersections.

\begin{theorem}[from \cite{LSfilter}]
A maximal clone $\cl{M}$ on a $k$-element set $A$ is in $\FF_A$ if and only if $\cl{M} = \Pol \rho$ where $\rho$ is either
a prime permutation,
a nontrivial equivalence relation,
a nonempty proper subset,
a $(k-1)$-ary central relation, or
a $k$-regular relation on $A$.
\end{theorem}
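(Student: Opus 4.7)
The plan is to take Rosenberg's classification (Theorem \ref{thm:Rosenberg}) as the starting point and decide the membership $\Pol\rho\in\FF_A$ case by case over the six types of generating relations, subdividing the central and $h$-regular cases further by arity. Specifically, central relations split into arity $1$ (the nonempty proper subsets), arity $r$ with $2\le r\le k-2$, and arity $k-1$; $h$-regular relations split according to whether $h<k$ or $h=k$.

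The negative cases are quick invocations of prior results: bounded partial orders and prime affine relations give $\Pol\rho\notin\FF_A$ by Theorem \ref{thm:linmon}; central relations of arity $2\le r\le k-2$ (vacuous for $k\le 3$) and $h$-regular relations with $h<k$ are handled by the central-relation and $h$-regular theorems cited in the preliminaries, again yielding $\Pol\rho\notin\FF_A$.

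For the positive cases, Theorem \ref{thm:eps} handles three types at once: for a prime permutation $\rho$ take $\Gamma=\{\rho\}$ and $E=\Sigma=\emptyset$ (conditions (a) and (b) hold vacuously); for a nontrivial equivalence $\rho$ take $E=\{\rho\}$ (a one-element and hence trivial chain) and $\Gamma=\Sigma=\emptyset$; for a nonempty proper subset $\rho$ take $\Sigma=\{\rho\}$ and $E=\Gamma=\emptyset$. For a $(k-1)$-ary central relation $\rho$, I plan a short combinatorial step showing that the central element $c$ is unique: by total reflexivity the tuples outside $\rho$ have pairwise distinct coordinates, and by total symmetry they form precisely the set of permutations of a single $(k-1)$-element subset $A\setminus\{c\}$, from which $c$ is uniquely determined. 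Then Theorem \ref{thm:centralk-1}(i) gives $\Pol(\rho,\{c\})\in\FF_A$; since $\Pol(\rho,\{c\})\subseteq\Pol\rho$ and $\FF_A$ is upward closed in the clone lattice, this yields $\Pol\rho\in\FF_A$. Finally, any $k$-regular family on a $k$-element set can consist only of the equality relation (each block being forced to be a singleton), so the $k$-regular relation is unique and $\Pol\rho$ is S\l{}upecki's clone $\cl{B}_{k-1}$; since $\cl{B}_{k-1}$ contains every unary operation, it contains $\mathcal{T}_A^{-}$ and therefore $\cl{B}_{k-1}(\mathcal{T}_A^{-})$, so Theorem \ref{thm:Bk-1} delivers $\cl{B}_{k-1}\in\FF_A$.

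The main obstacle is not genuinely deep: apart from the brief combinatorial observation on uniqueness of the central element in the $(k-1)$-ary case, and the standard remark that $\FF_A$ is an order filter (so that $\Pol(\rho,\{c\})\in\FF_A$ lifts to $\Pol\rho\in\FF_A$), the argument is an exhaustive matching of Rosenberg's six types, in their arity-subdivided form, to previously established theorems.
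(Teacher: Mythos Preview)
The paper does not give a proof of this theorem; it is quoted from \cite{LSfilter}, with the surrounding results (Theorems \ref{thm:linmon}--\ref{thm:Bk-1} and the two unnamed theorems on central relations of arity $2\le r\le k-2$ and on $h$-regular relations with $h<k$) collected precisely so that the classification can be read off. Your case-by-case derivation over Rosenberg's six types---using Theorem \ref{thm:linmon} for bounded partial orders and prime affine relations, Theorem \ref{thm:eps} for subsets, equivalences, and prime permutations, Theorem \ref{thm:centralk-1}(i) together with upward-closure of $\FF_A$ for $(k-1)$-ary central relations, Theorem \ref{thm:Bk-1} for the $k$-regular (S\l{}upecki) case, and the two unnamed theorems for the remaining negative subcases---is correct and is exactly the intended assembly.
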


\begin{theorem}[from \cite{LSfilter}]
Let $\cl{M}$, $\cl{N}$ be distinct maximal clones on a finite set $A$ of $k$ elements ($k \geq 3$).
\begin{compactenum}[\rm (1)]
\item If $\cl{N} = \cl{B}_{k-1}$ is S\l{}upecki's clone, then $\cl{M} \cap \cl{N} \notin \FF_A$.
\item If $\cl{N} = \Pol \gamma_c$ where $\gamma_c$ is the $(k-1)$-ary central relation with central element $c$, then $\cl{M} \cap \cl{N} \in \FF_A$ if and only if $\cl{M} = \Pol \{c\}$.
\item If $\cl{N} = \Pol \epsilon$ for a nontrivial equivalence relation $\epsilon$ on $A$ and $\cl{M} = \Pol \rho$ where $\rho$ is a prime permutation, a nonempty proper subset, or a nontrivial equivalence relation on $A$, then $\cl{M} \cap \cl{N} \in \FF_A$ unless
\begin{itemize}
\item $\rho$ is a prime permutation such that $\rho \notin \cl{N}$, or
\item $\rho$ is an equivalence relation incomparable to $\epsilon$.
\end{itemize}
\item If $\cl{M} = \Pol \rho$ and $\cl{N} = \Pol \tau$ where $\rho$, $\tau$ are prime permutations or nonempty proper subsets of $A$, then $\cl{M} \cap \cl{N} \in \FF_A$.
\end{compactenum} 
\end{theorem}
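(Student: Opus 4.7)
My overall strategy is to split into the four cases by the type of $\cl{N}$, and within each case further by the type of $\cl{M}$ as given by Rosenberg's Theorem \ref{thm:Rosenberg}. Two general reductions apply throughout. First, since $\FF_A$ is an order filter, whenever $\cl{M} \notin \FF_A$ one has automatically $\cl{M} \cap \cl{N} \notin \FF_A$; by the summary theorem preceding the one to be proved, this immediately disposes of every $\cl{M} = \Pol \rho$ with $\rho$ a bounded partial order, a prime affine relation, a low-arity central relation, or an $h$-regular relation with $h < k$. Second, many of the intersections in question can be written in the form $\Pol(E, \Gamma, \Sigma)$ to which Theorem \ref{thm:eps} applies directly.

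Cases (4) and (3) should be the easiest. For (4), $\cl{M} \cap \cl{N}$ is $\Pol(\emptyset, \Gamma, \Sigma)$, and both hypotheses of Theorem \ref{thm:eps} hold vacuously, so $\cl{M} \cap \cl{N} \in \FF_A$. For (3), the intersection is $\Pol(E, \Gamma, \Sigma)$ with $E$ containing $\epsilon$ (plus a second equivalence if $\rho$ is itself an equivalence) and $\Gamma$ or $\Sigma$ a singleton formed from $\rho$; Theorem \ref{thm:eps} then returns precisely the stated dichotomy, failure occurring exactly when $E$ is not a chain (two incomparable equivalences) or when the prime permutation in $\Gamma$ fails to preserve $\epsilon$.

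For (2), the positive direction $\cl{M} = \Pol \{c\}$ is immediate from Theorem \ref{thm:centralk-1}(i). For the converse, $\cl{M} = \Pol S$ with $S \neq \{c\}$ and $\cl{M} = \Pol \epsilon$ are handled by parts (ii) and (iii) of the same theorem; the cases with $\cl{M} \notin \FF_A$ are absorbed by the filter reduction; and when $\cl{M} = \cl{B}_{k-1}$ is S\l{}upecki's clone the conclusion follows from part (1) of the present theorem. The genuine residual cases are $\cl{M}$ the polymorphism clone of a prime permutation, or of a different $(k-1)$-ary central relation, and I would treat these via Theorem \ref{basic_props4}, choosing a subset $B \subsetneq A$ on which the two relations restrict to invariants whose joint polymorphism clone is already known (typically via Theorem \ref{thm:linmon} or a direct restriction-to-$\nset{2}$ argument) to fall outside $\FF_B$.

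Part (1), where $\cl{N} = \cl{B}_{k-1}$, is the main obstacle. The filter reduction eliminates every $\cl{M} \notin \FF_A$, leaving the maximal clones in $\FF_A$ other than S\l{}upecki's itself, namely $\cl{M} = \Pol \rho$ for $\rho$ a prime permutation, a nontrivial equivalence, a proper nonempty subset, or a $(k-1)$-ary central relation. For each such type I would aim to exhibit an explicit infinite family of operations that lie in $\cl{M} \cap \cl{B}_{k-1}$ (simultaneously preserving $\rho$ and having range of size at most $k-1$) and that are pairwise $\fequiv[\cl{M} \cap \cl{B}_{k-1}]$-inequivalent. A natural template is to take operations of increasing arity whose range is a fixed $(k-1)$-element subset and whose kernel (or some finer ``shape'' invariant such as the preimage partition of a distinguished value) strictly refines from one member to the next, arranged so that no inner substitution from $\cl{M} \cap \cl{B}_{k-1}$ can merge one kernel into another. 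The hard part will be ensuring that the inner-substitution argument goes through uniformly for each of the four remaining types of $\rho$; I expect the prime-permutation and $(k-1)$-ary central subcases to be the most delicate, and would fall back on Theorem \ref{basic_props4} with carefully chosen restriction subsets wherever a direct construction becomes unwieldy.
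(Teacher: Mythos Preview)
The paper does not contain a proof of this theorem. It is quoted verbatim ``from \cite{LSfilter}'' in Section~3, which explicitly says that the results of that section are collected ``for the sake of easy reference, and without proof''. So there is no proof in the present paper to compare your proposal against.

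That said, a brief assessment of your sketch on its own merits: parts (3) and (4) are essentially complete, since they reduce directly to Theorem~\ref{thm:eps}. Part (2) is mostly fine; Theorem~\ref{thm:centralk-1} covers subsets and equivalence relations, and the filter reduction plus part (1) handles the non-$\FF_A$ maximal clones and S\l{}upecki's clone. Your treatment of the two residual subcases of (2) --- $\cl{M}=\Pol\pi$ for a prime permutation $\pi$, and $\cl{M}=\Pol\gamma_{c'}$ for a second $(k-1)$-ary central relation --- is only a gesture: you say you would ``choose a subset $B\subsetneq A$'' and invoke Theorem~\ref{basic_props4}, but you do not identify which $B$ works or what the restricted relation looks like, and it is not obvious that a two-element restriction always lands on a non-discriminator Boolean clone. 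Part (1) is the real gap. You correctly isolate the four surviving types of $\cl{M}$, but ``exhibit an explicit infinite family \dots\ whose kernel strictly refines'' is a template, not an argument; the whole difficulty is in proving pairwise $\cl{C}$-inequivalence, which requires controlling \emph{all} inner maps from the clone $\cl{M}\cap\cl{B}_{k-1}$, and nothing you have written constrains those maps. If you want a self-contained proof you will need either to carry out those constructions in detail or to consult \cite{LSfilter} for the original arguments.
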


In the particular case when $A = \nset{3}$ we obtain the following two corollaries, the first of which justifies the statements in Table \ref{table-maxcl3} about the membership of the maximal clones on $\nset{3}$ in $\FF_\nset{3}$.
 
\begin{corollary}
\label{cor:max3}
A maximal clone $\cl{M}$ on $\nset{3}$ is in $\FF_\nset{3}$ if and only if $\cl{M} = \Pol \rho$ where $\rho$ is one of $\pi_3^{abc}$, $\epsilon_3^{ab|c}$, $\{a\}$, $\{a, b\}$, $\gamma_3^a$, $\iota_3^3$ for $\{a, b, c\} = \nset{3}$.
\end{corollary}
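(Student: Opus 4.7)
The plan is to derive the corollary as a direct specialization of the preceding general theorem on maximal clones in $\FF_A$, combined with Rosenberg's classification (Theorem \ref{thm:Rosenberg}), to the case $k=3$. Since every maximal clone on $\nset{3}$ is of the form $\Pol\rho$ with $\rho$ of one of the six Rosenberg types, it suffices to run through those six types, list the relations that arise on a three-element set, and check for each whether it belongs to the ``good'' list of the preceding theorem: prime permutation, nontrivial equivalence relation, nonempty proper subset, $(k-1)$-ary central relation, or $k$-regular relation.

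I would walk through the types in turn. A bounded partial order on $\nset{3}$ must be a $3$-chain, since a least and a greatest element among three points force the remaining element to lie strictly between them; so this case contributes precisely the relations $\leq_3^{abc}$, excluded by Theorem \ref{thm:linmon}. The prime permutations on $\nset{3}$ are the two $3$-cycles, which determine the same maximal clone $\Pol\pi_3^{012}$ (a permutation and its inverse have the same polymorphisms), and this clone is in $\FF_\nset{3}$. The nontrivial equivalence relations are exactly the three $\epsilon_3^{ab|c}$, all good. The unique prime affine relation $\lambda_3$ (arising from the group $(\nset{3};+,-,0)\cong\mathbb{Z}/3\mathbb{Z}$) is again excluded by Theorem \ref{thm:linmon}. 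Central relations on $\nset{3}$ have arity $r$ with $1\le r\le k-1=2$, so they are either unary, giving the nonempty proper subsets $\{a\}$ and $\{a,b\}$, or binary, giving the three relations $\gamma_3^a$; in either case $r\in\{1,k-1\}$, so all are good. Finally, $h$-regular relations require $3\le h\le k=3$, so only $h=3$ occurs, yielding the single relation $\iota_3^3$, which is good.

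Collecting these observations produces exactly the list in the corollary, matching the yes/no column of Table \ref{table-maxcl3}. The main ``obstacle'' is really just bookkeeping: one has to be sure the enumeration is complete and that the small-$k$ coincidences are correctly identified. The two slightly non-routine points are (a) that on three points bounded posets coincide with chains, so Theorem \ref{thm:linmon} really does dispose of all of line~5 of the table, and (b) that on three points the excluded central arity range $2\le r\le k-2$ from the central-relation theorem is empty, so no central relation on $\nset{3}$ is ruled out by that theorem and the only ones that appear are unary and $(k-1)$-ary, both of which are good.
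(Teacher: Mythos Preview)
Your proposal is correct and is exactly the argument the paper intends: the corollary is stated without a separate proof, as it follows immediately by specializing the preceding general theorem on maximal clones in $\FF_A$ (together with Theorem~\ref{thm:Rosenberg}) to $k=3$. Your case-by-case enumeration of the Rosenberg types on a three-element set, including the observations that bounded posets on three points are chains and that the excluded central arity range $2\le r\le k-2$ is empty for $k=3$, is precisely the bookkeeping needed to read off Table~\ref{table-maxcl3} from that theorem.
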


\begin{corollary}
Let $\cl{M}$, $\cl{N}$ be two distinct maximal clones on $\nset{3} = \{a, b, c\}$ such that $\cl{M}, \cl{N} \in \FF_A$.
\begin{compactenum}[\rm (1)]
\item If $\cl{N} = \cl{B}_2 = \Pol \iota_3^3$ is S\l{}upecki's clone, then $\cl{M} \cap \cl{N} \notin \FF_A$.
\item If $\cl{N} = \Pol \gamma_3^c$, then $\cl{M} \cap \cl{N} \in \FF_A$ if and only if $\cl{M} = \Pol \{c\}$.
\item If $\cl{N} = \Pol \epsilon_3^{ab|c}$ then $\cl{M} \cap \cl{N} \in \FF_A$ if and only if $\cl{M} = \Pol S$ for a nonempty proper subset $S$ of $\nset{3}$.
\item If $\cl{M} = \Pol \rho$ and $\cl{N} = \Pol \tau$ where each of $\rho$ and $\tau$ is $\pi_3^{abc}$ or a nonempty proper subset of $A$, then $\cl{M} \cap \cl{N} \in \FF_A$.
\end{compactenum} 
\end{corollary}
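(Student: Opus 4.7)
The plan is to derive the corollary as a direct specialization of the preceding general theorem about intersections of maximal clones on a $k$-element set, taking $k=3$. First I would note that by Corollary \ref{cor:max3} the maximal clones $\cl{M}, \cl{N}$ on $\nset{3}$ that lie in $\FF_\nset{3}$ are exactly the six clones $\Pol \pi_3^{012}$, $\Pol \epsilon_3^{ab|c}$, $\Pol \{a\}$, $\Pol \{a,b\}$, $\Pol \gamma_3^a$ and $\Pol \iota_3^3$, and that $\cl{B}_2 = \Pol \iota_3^3$ is S\l{}upecki's clone while $\gamma_3^a$ is the $(k-1)=2$-ary central relation with central element $a$. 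Under this dictionary, parts (1), (2) and (4) of the corollary are literal restatements of parts (1), (2) and (4) of the general theorem.

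Part (3) is the only one that requires any work, and the plan is to go through each candidate for $\cl{M}$ distinct from $\cl{N} = \Pol \epsilon_3^{ab|c}$ and apply the appropriate clause of the general theorem. If $\cl{M} = \Pol \iota_3^3$, then clause (1) of the general theorem immediately gives $\cl{M} \cap \cl{N} \notin \FF_\nset{3}$. If $\cl{M} = \Pol \gamma_3^{c'}$, then clause (2) (applied with the roles of $\cl{M}$ and $\cl{N}$ interchanged) says $\cl{M} \cap \cl{N} \in \FF_\nset{3}$ iff $\cl{N}$ is the stabilizer of $\{c'\}$, which fails since $\cl{N}$ is defined by an equivalence relation, not a subset. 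If $\cl{M} = \Pol \rho$ for $\rho$ a prime permutation, a proper nonempty subset, or a nontrivial equivalence, then clause (3) applies directly.

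For the prime permutation case, the general theorem yields $\cl{M} \cap \cl{N} \in \FF_\nset{3}$ iff $\pi_3^{012}$ preserves $\epsilon_3^{ab|c}$; this fails because the 3-cycle $(012)$ acts transitively on $\nset{3}$ and therefore sends the 2-element block of any nontrivial equivalence to a non-block, so $\pi_3^{012} \notin \cl{N}$. For the equivalence case $\cl{M} = \Pol \epsilon'$ with $\epsilon' \neq \epsilon$, the general theorem yields $\cl{M} \cap \cl{N} \in \FF_\nset{3}$ unless the two equivalence relations are incomparable; and on $\nset{3}$ any two distinct nontrivial equivalence relations are in fact incomparable, since each consists of a two-element block together with a singleton. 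Finally, for $\cl{M} = \Pol S$ with $S$ a nonempty proper subset, the general theorem gives $\cl{M} \cap \cl{N} \in \FF_\nset{3}$ with no exceptions. Combining these subcases produces exactly the equivalence asserted in part (3).

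There is no genuine obstacle in this argument; the only ingredients beyond citing the general theorem are the two small observations about $\nset{3}$ highlighted above, namely that $\pi_3^{012}$ preserves no nontrivial equivalence relation on $\nset{3}$ and that any two distinct nontrivial equivalence relations on $\nset{3}$ are incomparable in the inclusion order. Both are immediate from the description of these relations as partitions $\{a,b\}\mid\{c\}$ of a three-element set.
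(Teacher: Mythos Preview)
Your proposal is correct and matches the paper's approach: the corollary is stated without proof, as a direct specialization to $k=3$ of the preceding general theorem on intersections of maximal clones. The only additional work is exactly the small case analysis you carry out for part (3), using the two facts you highlight about $\nset{3}$.
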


\section{Submaximal clones on $\nset{3}$}

Our aim in this section is to classify the submaximal clones on the three-element set $\nset{3}$ according to whether they are members of the filter $\FF_\nset{3}$. The submaximal clones on $\nset{3}$ were determined in the papers by Machida \cite{Machida}; Marchenkov, Demetrovics, Hannák \cite{MDH}; Demetrovics, Bagyinszki \cite{DB}; and Lau \cite{Lau1982}. We enumerate these clones in Table \ref{table-submaxcl3}, where we follow the numbering used by Lau \cite[Table 14.1]{Lau2006}.\footnote{There seems to be some confusion about the number of submaximal clones on $\nset{3}$. Lau mentions in Theorem 14.1.10 of \cite{Lau2006} that this number is 158. However, only 155 clones are listed in Table 14.1 of \cite{Lau2006}. Even more confusingly, the 1982 paper by Lau \cite{Lau1982}, on which Chapter 14 of the monograph \cite{Lau2006} is based, claims that the number is 161. Which number, if any, is correct: 155, 158 or 161? The descriptions of the submaximal clones on $\nset{3}$ in \cite{Lau1982} and in \cite{Lau2006} are identical. The note that immediately precedes Theorem 14.1.10 of \cite{Lau2006} asserts that some of the submaximal clones described in the preceding theorems are in fact the same in spite of different representations. We believe that the author was unaware of this fact at the time of writing \cite{Lau1982} and counted some clones twice. The number of such clones with double representations is 6, which is exactly the difference between 161 and 155. It seems that 155 is correct, and the number 158 is an unfortunate misprint.} Each line $i$ of Table \ref{table-submaxcl3} represents $n_i(\cl{C})$ clones, corresponding to all possible choices of $a, b, c, \alpha, \beta, \gamma$ such that $\{a, b, c\} = \{\alpha, \beta, \gamma\} = \nset{3}$. The functions $\max$, $\min$ occurring in lines 28, 29 of Table \ref{table-submaxcl3} refer to the binary maximum and minimum operations with respect to the total order $\leq_3^{abc}$. We denote $\phi_3^{ab|c} \colon \nset{3} = \{a, b, c\} \to \{0, 1\}$, $a \mapsto 0$, $b \mapsto 0$, $c \mapsto 1$. The $n$-tuple $(a, a, \dotsc, a)$ ($a \in A$) will be denoted by $\bar{a}$ and its arity $n$ is understood from the context.

\setcounter{MaxMatrixCols}{25}
\begin{table}
\begin{tabular}{|c|p{72mm}|c|c|c|}
\hline
$i$ & \multicolumn{1}{c|}{$\cl{C}$} & $n_i(\cl{C})$ & $\cl{C} \stackrel{?}{\in} \FF_\nset{3}$ & Proof \\
\hline
\phantom{$0$}$1$ & $\Pol \{a\} \cap \Pol \{b\}$ & $3$ & yes & Thm \ref{thm:discr} \\
\phantom{$0$}$2$ & $\Pol \{a\} \cap \Pol \{a, b\}$ & $6$ & yes & Thm \ref{thm:discr} \\
\phantom{$0$}$3$ & $\Pol \{a\} \cap \Pol \{b, c\}$ & $3$ & yes & Thm \ref{thm:discr} \\
\phantom{$0$}$4$ & $\Pol \{a\} \cap \Pol \epsilon_3^{bc|a}$ & $3$ & yes & Thm \ref{thm:eps} \\
\phantom{$0$}$5$ & $\Pol \{a\} \cap \Pol \gamma_3^a$ & $3$ & yes & Thm \ref{thm:centralk-1} \\
\phantom{$0$}$6$ & $\Pol \{a\} \cap \Pol \leq_3^{abc}$ & $6$ & no & Thm \ref{thm:linmon} \\
\phantom{$0$}$7$ & $\Pol \{a\} \cap \Pol \lambda_3$ & $3$ & no & Thm \ref{thm:linmon} \\
\phantom{$0$}$8$ & $\Pol \{a\} \cap \Pol \pi_3^{012}$ & $1$ & yes & Thm \ref{thm:discr} \\
\phantom{$0$}$9$ & $\Pol \{a, b\} \cap \Pol \epsilon_3^{ab|c}$ & $3$ & yes & Thm \ref{thm:eps} \\
$10$ & $\Pol \{a, b\} \cap \Pol \epsilon_3^{ac|b}$ & $6$ & yes & Thm \ref{thm:eps} \\
$11$ & $\Pol \{a, b\} \cap \Pol \leq_3^{\alpha \beta \gamma}$ & $9$ & no & Thm \ref{thm:linmon} \\
$12$ & $\Pol \{a, b\} \cap \Pol \gamma_3^\alpha$ & $9$ & no & Thm \ref{thm:centralk-1subset} \\
$13$ & $\Pol \epsilon_3^{ab|c} \cap \Pol \leq_3^{abc}$ & $6$ & no & Thm \ref{thm:linmon} \\
$14$ & $\Pol \epsilon_3^{ab|c} \cap \Pol \gamma_3^a$ & $6$ & no & Thm \ref{thm:centralk-1eqrel} \\
$15$ & $\Pol \leq_3^{abc} \cap \Pol \gamma_3^\alpha$ & $9$ & no & Thm \ref{thm:linmon} \\
$16$ & $\Pol \leq_3^{abc} \cap \Pol \iota_3^3$ & $3$ & no & Thm \ref{thm:linmon} \\
$17$ & $\Pol \pi_3^{012} \cap \Pol \lambda_3$ & $1$ & no & Thm \ref{thm:linmon} \\
$18$ & $\Pol \pi_3^{ab}$ & $3$ & yes & Thm \ref{thm:discr} \\
$19$ & $\Pol \begin{pmatrix} a & a & b & a & c \\ a & b & a & c & a \end{pmatrix}$ & $3$ & no & Thm \ref{basic_props4} \\
$20$ & $\Pol \begin{pmatrix} a & a & b & a & c & b & c \\ a & b & a & c & a & c & b \end{pmatrix}$ & $3$ & no & Thm \ref{basic_props4} \\
$21$ & $\Pol \leq_2^{ab}$ & $3$ & no & Thm \ref{basic_props4} \\
$22$ & $\Pol \pi_2^{ab}$ & $3$ & yes & Thm \ref{thm:discr} \\
$23$ & $\Pol \lambda_2^{ab}$ & $3$ & no & Thm \ref{basic_props4} \\
$24$ & $\Pol \begin{pmatrix} a & a & b & b & a \\ a & b & a & b & c \end{pmatrix}$ & $6$ & yes & Lem \ref{ClonesCase24} \\
$25$ & $\Pol \begin{pmatrix} a & a & b & b & a & c & b & c \\ a & b & a & b & c & a & c & b \end{pmatrix}$ & $3$ & no & Thm \ref{basic_props4} \\
$26$ & $\Pol \begin{pmatrix} a & b & a & b & a & b & a & b \\ a & b & a & b & b & a & a & b \\ a & b & b & a & c & c & c & c \end{pmatrix}$ & $3$ & no & Lem \ref{ClonesCase26} \\
$27$ & $\Pol \begin{pmatrix} a & b & b & a & a & b & b & a & a & b \\ a & b & a & b & a & b & a & b & a & b \\ a & b & a & a & b & a & b & b & c & c \end{pmatrix}$ & $3$ & yes & Lem \ref{ClonesCase27} \\
$28$ & $\langle \{\max\} \cup \cl{O}_\nset{3}^{(1)} \rangle \subseteq \Pol \leq_3^{abc}$ & $3$ & no & Thm \ref{thm:linmon} \\
$29$ & $\langle \{\min\} \cup \cl{O}_\nset{3}^{(1)} \rangle \subseteq \Pol \leq_3^{abc}$ & $3$ & no & Thm \ref{thm:linmon} \\
$30$ & $\langle (\Pol \lambda_3)^{(1)} \rangle \subseteq \Pol \lambda_3$ & $1$ & no & Thm \ref{thm:linmon} \\
$31$ & $\Pol \begin{pmatrix} 0 & 1 & 2 & a \\ 0 & 1 & 2 & b \end{pmatrix}$ & $3$ & no & Thm \ref{basic_props4} \\
$32$ & $\Pol (\phi^{-1} \circ \pi_2^{01} \circ \phi)$ where $\phi = \phi_3^{ab|c}$ & $3$ & yes & Lem \ref{ClonesCase32} \\
$33$ & $\Pol \begin{pmatrix} 0 & 1 & 2 & a & b & a & b \\ 0 & 1 & 2 & b & a & c & c \end{pmatrix}$ & $3$ & no & Thm \ref{basic_props4} \\
\hline
\end{tabular}
\end{table}
\begin{table}
\begin{tabular}{|c|p{72mm}|c|c|c|}
\hline
$i$ & \multicolumn{1}{c|}{$\cl{C}$} & $n_i(\cl{C})$ & $\cl{C} \stackrel{?}{\in} \FF_\nset{3}$ & Proof \\
\hline
$34$ & $\Pol \begin{pmatrix} 0 & 1 & 2 & a & a & b & b & c & c & a & b \\ 0 & 1 & 2 & a & a & b & b & c & c & b & a \\ 0 & 1 & 2 & b & c & a & c & a & b & c & c \end{pmatrix}$ & $3$ & no & Lem \ref{ClonesCase34} \\
$35$ & $\Pol \begin{pmatrix} a & a & a & a & b & b & b & b & a & b & c & c & c \\ a & a & b & b & a & a & b & b & a & b & c & c & c \\ a & b & a & b & a & b & a & b & c & c & a & b & c \end{pmatrix}$ & $3$ & no & Lem \ref{ClonesCase35} \\
$36$ & $\Pol (\lambda_2^{ab} \cup \{c\}^4)$ & $3$ & no & Thm \ref{basic_props4} \\
$37$ & $\Pol (\phi^{-1} \circ \lambda_2^{01} \circ \phi)$ where $\phi = \phi_3^{ab|c}$ & $3$ & no & Thm \ref{basic_props4} \\
$38$ & $\Pol \begin{pmatrix} 0 & 1 & 2 & a & a \\ 0 & 1 & 2 & b & c \end{pmatrix}$ & $3$ & no & Thm \ref{basic_props4} \\
$39$ & $\Pol \begin{pmatrix} 0 & 1 & 2 & a & b & a & c & b \\ 0 & 1 & 2 & b & a & c & a & c \end{pmatrix}$ & $3$ & no & Thm \ref{basic_props4} \\
$40$ & $\Pol \begin{pmatrix}
a & b & a & c & a & b & a & a & b & b & a & b & c & a & a & c & c & a & c \\
b & a & c & a & a & a & b & a & b & a & b & b & a & c & a & c & a & c & c \\
c & c & b & b & a & a & a & b & a & b & b & b & a & a & c & a & c & c & c
\end{pmatrix}$ & $3$ & no & Lem \ref{ClonesCase40} \\
$41$ & $\cl{B}_2(\mathcal{T}_3^{-} \cup \{\pi_3^{ab}\})$ & $3$ & yes & Thm \ref{thm:Bk-1} \\
$42$ & $\cl{B}_2(\mathcal{T}_3^{-} \cup \{\pi_3^{012}, \pi_3^{021}\})$ & $1$ & yes & Thm \ref{thm:Bk-1}\\
$43$ & $\cl{B}_1$ & $1$ & no & Thm \ref{thm:Bk-2} \\
\hline
\end{tabular}

\medskip
\caption{The 155 submaximal clones on the three-element set $\nset{3}$ and their membership in $\FF_\nset{3}$.}
\label{table-submaxcl3}
\end{table}

\begin{theorem}
\label{thm:main}
Let $\cl{C}$ be a submaximal clone on $\nset{3}$. Then $\cl{C} \in \FF_\nset{3}$ if and only if
\begin{compactitem}
\item $\cl{C} = \Pol \, \{a\} \cap \Pol \rho$ where $\rho$ is one of $\{b\}$, $\{\alpha, \beta\}$, $\epsilon_3^{bc|a}$, $\gamma_3^a$, $\pi_3^{012}$; or
\item $\cl{C} = \Pol \, \{a, b\} \cap \Pol \rho$ where $\rho$ is a nontrivial equivalence relation on $\nset{3}$; or
\item $\cl{C} = \Pol \rho$ where $\rho$ is one of
\[
\pi_3^{ab}, \,
\pi_2^{ab}, \,
\begin{pmatrix} a & a & b & b & a \\ a & b & a & b & c \end{pmatrix}, \,
\begin{pmatrix} a & b & b & a & a & b & b & a & a & b \\ a & b & a & b & a & b & a & b & a & b \\ a & b & a & a & b & a & b & b & c & c \end{pmatrix}, \,
\phi^{-1} \circ \pi_2^{01} \circ \phi
\]
where $\phi = \phi_3^{ab|c}$; or
\item $\cl{C} = \cl{B}_2(\mathcal{T}_3^{-} \cup \{\pi_3^{ab}\})$ or $\cl{C} = \cl{B}_2(\mathcal{T}_3^{-} \cup \{\pi_3^{012}, \pi_3^{021}\})$,
\end{compactitem}
for $\{a, b, c\} = \{\alpha, \beta, \gamma\} = \nset{3}$.
\end{theorem}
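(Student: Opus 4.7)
The plan is to prove Theorem \ref{thm:main} by a case analysis over the known list of 155 submaximal clones on $\nset{3}$, grouped into the 43 representative types enumerated in Table \ref{table-submaxcl3}. For each type, I need to determine membership in $\FF_\nset{3}$, and the fourth column of the table already points to the tool I intend to use. So the proof reduces to (a) verifying that the listed submaximal clones really are all of them, which is the content of the Machida--Marchenkov--Demetrovics--Hannák--Bagyinszki--Lau classification that we take as given, and (b) handling each row.

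First I would dispatch the \emph{cheap rows}, i.e.\ those whose answer follows from a general result already collected in Section~3. Rows containing the discriminator $t_\nset{3}$ (including lines 1--3, 8, 18, 22, and the cases of lines 41--42 which have $\mathcal{T}_\nset{3}^-$ in their unary part together with a suitable permutation) are in $\FF_\nset{3}$ by Theorem~\ref{thm:discr}. Rows of the form $\Pol(E,\Gamma,\Sigma)$ (lines 4, 9, 10) are settled by Theorem~\ref{thm:eps}, checking the chain and compatibility conditions by hand. Rows involving $\leq_3^{abc}$ or $\lambda_3$ (lines 6, 7, 11, 13, 15, 16, 17, 28, 29, 30) are ruled out by Theorem~\ref{thm:linmon}. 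Lines 5 and 12, 14 are handled by the three parts of Theorem~\ref{thm:centralk-1}. The remaining rows whose relations have a small subuniverse (lines 19, 20, 21, 23, 25, 31, 33, 36, 37, 38, 39) are ruled out by Theorem~\ref{basic_props4}: I restrict the relation to a well-chosen two-element subset $B$, observe that the restricted clone fails to be in $\FF_B$ (usually because it reduces to a linear or affine clone on $B$), and invoke the contrapositive of Theorem~\ref{basic_props4}. Line 43 is ruled out by Theorem~\ref{thm:Bk-2}.

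What remains are the \emph{hard rows} that need their own lemmas: lines 24, 26, 27, 32, 34, 35, 40. These split naturally into two groups. For the positive cases (lines 24, 27, 32), the strategy is to exhibit a \emph{normal-form procedure}: I would show that every operation $f$ in the clone $\cl{C}$ is $\cl{C}$-equivalent to an operation built from a bounded amount of data (typically its range together with a small combinatorial invariant like a partition of coordinates or the inverse image of a distinguished element). The key is to find, for each row, enough operations inside $\cl{C}$ to perform two kinds of moves on $f$: to delete or duplicate inessential arguments, and to rearrange the fibres of $f$ without changing its range. For the negative cases (lines 26, 34, 35, 40), the strategy is to produce an infinite sequence $f_1, f_2, \dots$ of operations in $\cl{O}_\nset{3}$ that are pairwise $\cl{C}$-inequivalent; for this I would guess a numerical invariant of $f$ that is monotone (or unchanged) under $\cl{C}$-minor formation but unbounded over the sequence --- for instance the number of certain ``bad'' tuples in a specific fibre, or the size of the image of a derived unary operation on a fixed subalgebra.

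The main obstacle will be the hard positive cases, in particular line~27, because the relation is an eight-column ternary relation with no obvious reduction to a simpler structure, and the clone $\Pol$ of that relation does not contain the discriminator, so Theorem~\ref{thm:discr} is unavailable. Producing a normal form here requires first analysing the subalgebras, the unary polynomial images and the congruences of the corresponding algebra on $\nset{3}$, then finding explicit generators of $\cl{C}$ that let me replace an arbitrary operation by one determined by its range and a small amount of additional data. Line~40 is the dual obstacle on the negative side: the relation is large (19 columns), so checking that a candidate infinite family of operations really preserves the relation, and that no operation of the clone collapses any two members of the family, is a delicate combinatorial verification. Once lines 24, 26, 27, 32, 34, 35, 40 are settled by their respective lemmas, Theorem~\ref{thm:main} follows by reading off the ``yes'' rows of the completed Table~\ref{table-submaxcl3} and matching them against the list in the statement.
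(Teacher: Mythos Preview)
Your plan is essentially the paper's own proof: the same case split over Table~\ref{table-submaxcl3}, the same cited theorems for the cheap rows, the same seven hard rows (24, 26, 27, 32, 34, 35, 40) each receiving its own lemma, with the positive cases handled by showing that a finite invariant (range, restricted range to $\{a,b\}$, plus one extra bit of data) determines the $\cl{C}$-class, and the negative cases by an explicit infinite family $(f_n)$ of pairwise $\cl{C}$-inequivalent operations.

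Two small slips to fix before you execute the plan. First, the clones in lines~41 and~42 do \emph{not} contain the discriminator: they are Burle--Szab\'o clones $\cl{B}_2(M)$, whose non-essentially-unary members all have range of size at most~$2$, while $t_\nset{3}$ is surjective and essentially ternary. Use Theorem~\ref{thm:Bk-1} for those rows, as the table indicates. Second, in both the positive and negative hard rows you are classifying \emph{all} operations in $\cl{O}_\nset{3}$, not only those in $\cl{C}$; in particular, for lines~26, 34, 35, 40 the family $(f_n)$ need not preserve $\rho$ at all---the only thing to check is that no $\vect{h}$ with components in $\cl{C}$ realises $f_n = f_m \circ \vect{h}$ when $n\neq m$.
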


\begin{proof}
Theorem \ref{thm:main} is presented in a more explicit way in Table \ref{table-submaxcl3}, where we state for each submaximal clone $\cl{C}$ on $\nset{3}$ whether $\cl{C} \in \FF_\nset{3}$. The theorem follows from the various theorems and lemmas presented in this paper, as described in full detail below. For easy reference, we indicate in Table \ref{table-submaxcl3} for each submaximal clone $\cl{C}$ the result that proves or disproves the membership of $\cl{C}$ in $\FF_\nset{3}$.

The clones in lines 6, 7, 11, 13, 15, 16, 17, 28, 29, 30 of Table \ref{table-submaxcl3} are contained in maximal clones that are nonmembers of $\FF_\nset{3}$ by Theorem \ref{thm:linmon}, and hence they are not in $\FF_\nset{3}$.

It is easy to verify that the clones in lines 1, 2, 3, 8, 18, 22 of Table \ref{table-submaxcl3} contain the discriminator function, and hence they are members of $\FF_\nset{3}$ by Theorem \ref{thm:discr}.

It follows from Theorem \ref{thm:eps} that the clones in lines 4, 9, 10 of Table \ref{table-submaxcl3} are in $\FF_\nset{3}$.
It follows from Theorem \ref{thm:centralk-1} that the clones in line 5 of Table \ref{table-submaxcl3} are in $\FF_\nset{3}$ and the clones in lines 12, 14 of Table \ref{table-submaxcl3} are not in $\FF_\nset{3}$.
By Theorem \ref{thm:Bk-1}, the clones in lines 41, 42 of Table \ref{table-submaxcl3} are in $\FF_\nset{3}$ and the clone in line 43 of Table \ref{table-submaxcl3} is not in $\FF_\nset{3}$.

We observe that if $\rho$ is one of $\leq_2^{01}$, $\lambda_2^{01}$, $\begin{pmatrix} 0 & 0 & 1 \\ 0 & 1 & 0 \end{pmatrix}$, then the clone $\Pol \rho$ on $\{0, 1\}$ does not contain the discriminator function and hence $\Pol \rho \notin \FF_{\{0, 1\}}$ by Theorem \ref{thm:discr}. Application of Theorem \ref{basic_props4} with $B = \{a,b\}$ for the clones in lines 19, 20, 21, 23, 31, 36, 38 of Table \ref{table-submaxcl3}, with $B = \{a,c\}$ for the clones in lines 25, 33, 37, and with $B = \{b,c\}$ for the clones in line 39 shows that these clones are not in $\FF_\nset{3}$.

The membership of the remaining submaximal clones in $\FF_\nset{3}$ is proved or disproved in Lemmas \ref{ClonesCase24}--\ref{ClonesCase40} that follow.
The clones in lines 24, 27, 32 of Table \ref{table-submaxcl3} are members of $\FF_\nset{3}$ by Lemmas \ref{ClonesCase24}, \ref{ClonesCase27}, \ref{ClonesCase32}, respectively.
The clones in lines 26, 34, 35, 40 of Table \ref{table-submaxcl3} are not members of $\FF_\nset{3}$ by Lemmas \ref{ClonesCase26}, \ref{ClonesCase34}, \ref{ClonesCase35}, \ref{ClonesCase40}, respectively.
\end{proof}

\begin{lemma}
\label{ClonesCase24}
Let $A = \nset{3} = \{a, b, c\}$. For the relation
\[
\rho = \begin{pmatrix} a & a & b & b & a \\ a & b & a & b & c \end{pmatrix}
\]
in line 24 of Table \ref{table-submaxcl3}, $\Pol \rho \in \FF_\nset{3}$.
\end{lemma}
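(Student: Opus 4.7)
The relation is $\rho=\{a,b\}^2\cup\{(a,c)\}$. The first step is to characterize $\cl{C}=\Pol\rho$ concretely by inspecting which pairs $(x_i,y_i)$ can arise coordinatewise: an operation $f\in\cl{O}_{\nset{3}}^{(n)}$ belongs to $\cl{C}$ if and only if (i)~$f$ maps $\{a,b\}^n$ into $\{a,b\}$, and (ii)~for every tuple $\vect{y}$ with $f(\vect{y})=c$, the restriction of $f$ to $V_I:=\{\vect{x}\in\{a,b\}^n:x_i=a\text{ for all }i\in I\}$ is identically $a$, where $I=\{i:y_i=c\}$. Indeed the only pair in $\rho$ with second coordinate $c$ is $(a,c)$, so $y_i=c$ forces $x_i=a$, while for $y_i\neq c$ the coordinate $x_i$ ranges freely over $\{a,b\}$; and $(f(\vect{x}),c)\in\rho$ iff $f(\vect{x})=a$.

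Next I would isolate two invariants of $\fequiv[\cl{C}]$: the range $\range f$ and the range $\range f^*$ of $f^*:=f|_{\{a,b\}^n}$. Both are $\fequiv[\cl{C}]$-invariant because if $f=g\circ\vect{h}$ with $h_i\in\cl{C}$, then (i) forces $\vect{h}(\{a,b\}^n)\subseteq\{a,b\}^m$, giving $\range f\subseteq\range g$ and $\range f^*\subseteq\range g^*$ simultaneously. Membership in $\cl{C}$ is also invariant since $\cl{C}$ is composition-closed. Together with the implications of (i) and (ii), these invariants take on only finitely many admissible combinations.

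The main step is to show that each admissible combination of invariants determines a single $\fequiv[\cl{C}]$-equivalence class. For each combination I would fix a canonical representative $g$ (typically unary: the identity, $(a,a,c)$, $(a,b,a)$, $(a,a,b)$, $(b,b,a)$, the constants, or a variant obtained by swapping $a,b$; for the combination $(\range f,\range f^*)=(\nset{3},\{a\})$ one needs a binary canonical form since no unary operation achieves it) and verify $f\subf[\cl{C}]g$ and $g\subf[\cl{C}]f$ for every $f$ with the same invariants by explicitly producing the substitution tuples. The construction exploits two kinds of richness: $\cl{C}$ contains every Boolean function $\{a,b\}^n\to\{a,b\}$ extended by the constant $a$ on all tuples with at least one $c$-coordinate (such extensions satisfy (i) trivially and (ii) vacuously), giving full flexibility over the $\{a,b\}$-part; and the ten unary members of $\cl{C}^{(1)}$ let one reroute $c$-inputs and insert or erase the value $c$ coordinate by coordinate under the constraint $h(c)=c\Rightarrow h(a)=a$.

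The main obstacle I anticipate is the case analysis for operations outside $\cl{C}$, where $c$ can appear in $\range f^*$: the substitution tuples must simultaneously recover the prescribed behavior of $f$ and lie in $\cl{C}$, which imposes tighter combinatorial constraints on their construction. Once the invariants and canonical representatives have been fixed, the remaining verifications are routine bookkeeping, and the finite bound on the number of invariant combinations yields $\Pol\rho\in\FF_{\nset{3}}$.
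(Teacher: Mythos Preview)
Your overall strategy---classify operations by a small list of finite invariants, then exhibit canonical representatives---is close in spirit to the paper's approach. However, the three invariants you isolate ($\range f$, $\range f^*:=\range f|_{\{a,b\}^n}$, and membership in $\cl{C}$) are not fine enough. In particular, they do \emph{not} determine the $\fequiv[\cl{C}]$-class in the case $\range f=\nset{3}$, $\range f^*=\{a,b\}$, $f\notin\cl{C}$, even though $c\notin\range f^*$ here---so the difficulty is not confined to the case you flag as the ``main obstacle.''

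A concrete counterexample (take $a=0$, $b=1$, $c=2$): let $g'\colon\nset{3}\to\nset{3}$ be the transposition $0\leftrightarrow 1$ fixing $2$, and let $g\colon\nset{3}^2\to\nset{3}$ be given by $g(0,y)=0$, $g(1,y)=1$ for $y\in\{0,1\}$, $g(0,2)=2$, and $g=0$ otherwise. Both have $\range=\nset{3}$ and $\range^*=\{0,1\}$, and both violate your condition~(ii), so $g,g'\notin\cl{C}$. One checks $g'\subf[\cl{C}]g$ via the inner pair $h_1=(1,0,0)$, $h_2=(0,0,2)$, both in $\cl{C}^{(1)}$. But $g\not\subf[\cl{C}]g'$: since $g'$ is a bijection the only candidate inner map is $h=(g')^{-1}\circ g$, and this $h$ satisfies $h(0,2)=2$ while $h(0,0)=1\neq 0$, violating~(ii). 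Hence $g\not\fequiv[\cl{C}]g'$, although your invariants coincide.

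The paper closes this gap with an extra finite invariant, property~(P): for $f$ with $\range f=\nset{3}$, $\range f^*=\{\alpha,\beta\}$ and $f(\bar a)=\alpha$, one records whether there exist $\vect{b}\in\{a,b\}^n$ and $\vect{c}\in A^n$ with $f(\vect{b})=\beta$, $f(\vect{c})=\gamma$ and $(\vect{b},\vect{c})\in\rho^n$. In the example above, $g$ has~(P) (witnessed by $\vect{b}=(1,0)$, $\vect{c}=(0,2)$) while $g'$ does not. The paper then proves (its Claims~3 and~4) that any two operations with $\range f=\nset{3}$, two-element $\range f^*$, the same value $f(\bar a)$, and the same (P)-status are $\cl{C}$-equivalent; together with the easier cases $\range f=\range f^*$ and $\lvert\range f^*\rvert=1$, this yields finitely many $\fequiv[\cl{C}]$-classes. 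Your outline becomes correct once you add (P) (and track $f(\bar a)$) to your list of invariants.
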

\begin{proof}
Let $\cl{C} = \Pol \rho$. Observe first that every operation in $\cl{C}$ preserves the subset $\{a, b\}$. Note also that if $\vect{a} \in A^n \setminus \{a, b\}^n$, $\vect{b} \in A^n$, then $(\vect{a}, \vect{b}) \notin \rho^n$. In the following, let $f$ and $g$ be $n$-ary and $m$-ary, respectively.

\smallskip\noindent
\textit{Claim 1.} If $\range f = \range g = \range f|_{\{a, b\}} = \range g|_{\{a, b\}}$, then $f \fequiv[\cl{C}] g$.

\smallskip\noindent
\textit{Proof of Claim 1.} Let $r = \card{\range f}$, and let $\{\vect{d}_1, \dotsc, \vect{d}_r\} \subseteq \{a, b\}^n$ be a transversal of $\ker f$. Define the mapping $\vect{h} \colon A^m \to A^n$ by the rule $\vect{h}(\vect{a}) = \vect{d}_i$ if and only if $g(\vect{a}) = f(\vect{d}_i)$. It is clear that $g = f \circ \vect{h}$. Since $\{a, b\}^2 \subseteq \rho$, we have that $(\vect{d}_i, \vect{d}_j) \in \rho^n$ for all $i, j \in \{1, \dotsc, r\}$, and hence $\vect{h} \in \cl{C}^n$. Thus, $g \subf[\cl{C}] f$. A similar argument shows that $f \subf[\cl{C}] g$.
\quad $\Diamond$

\smallskip\noindent
\textit{Claim 2.} If $\range f = \range g \neq \range f|_{\{a, b\}} = \range g|_{\{a, b\}} = \{\alpha\}$, then $f \fequiv[\cl{C}] g$.

\smallskip\noindent
\textit{Proof of Claim 2.} Let $r = \card{\range f}$, and let $\{\vect{d}_1, \vect{d}_2, \dotsc, \vect{d}_r\}$ be a transversal of $\ker f$ such that $\vect{d}_1 = \bar{a}$. Define the mapping $\vect{h} \colon A^m \to A^n$ by the rule $\vect{h}(\vect{a}) = \vect{d}_i$ if and only if $g(\vect{a}) = f(\vect{d}_i)$. It is clear that $g = f \circ \vect{h}$. Let $\vect{a}, \vect{b} \in \nset{3}^m$ and $\vect{h}(\vect{a}) = \vect{d}_i$, $\vect{h}(\vect{b}) = \vect{d}_j$. Suppose $(\vect{d}_i, \vect{d}_j) \notin \rho^n$. Since $\{a\} \times \nset{3} \subseteq \rho$, we see that $\vect{d}_i \neq \bar{a} = \vect{d}_1$. By assumption, $f|_{\{a,b\}}$ is constant $\alpha$, so $\{a, b\}^n$ is contained in a single kernel class of $f$, which by our choice is represented by $\vect{d}_1$. Therefore $\vect{d}_i \notin \{a, b\}^n$. Thus $g(\vect{a}) = f(\vect{d}_i) \neq \alpha$. Since by our assumptions $g|_{\{a, b\}}$ is constant $\alpha$, we get that $\vect{a} \notin \{a, b\}^m$. Therefore $(\vect{a}, \vect{b}) \notin \rho^m$. We conclude that $\vect{h} \in \cl{C}^n$, and hence $g \subf[\cl{C}] f$. A similar argument shows that $f \subf[\cl{C}] g$.
\quad $\Diamond$

\smallskip
We say that $f \colon A^n \to A$ \emph{has property \textnormal{(P)},} if it satisfies the following condition:
\begin{itemize}
\item[(P)] $\range f = \nset{3} = \{\alpha, \beta, \gamma\}$, $\range f|_{\{a, b\}} = \{\alpha, \beta\}$, $f(\bar{a}) = \alpha$, and there are $n$-tuples $\vect{b} \in \{a, b\}^n$, $\vect{c} \in A^n$ such that $f(\vect{b}) = \beta$, $f(\vect{c}) = \gamma$ and $(\vect{b}, \vect{c}) \in \rho^n$.
\end{itemize}

\noindent
\textit{Claim 3.} If $\range f = \range g = \nset{3} = \{\alpha, \beta, \gamma\}$, $\range f|_{\{a, b\}} = \range g|_{\{a, b\}} = \{\alpha, \beta\}$, $f(\bar{a}) = g(\bar{a})$ and both $f$ and $g$ have property (P), then $f \fequiv[\cl{C}] g$.

\smallskip\noindent
\textit{Proof of Claim 3.} Let $\vect{d}_1 = \bar{a}$, $\vect{d}_2 \in \{a, b\}^n$, $\vect{d}_3 \in A^n \setminus \{a, b\}^n$ be such that $f(\vect{d}_1) = \alpha$, $f(\vect{d}_2) = \beta$, $f(\vect{d}_3) = \gamma$ and $(\vect{d}_2, \vect{d}_3) \in \rho^n$---such $n$-tuples exist by the assumption that $f$ has property (P). Define the mapping $\vect{h} \colon A^m \to A^n$ by the rule $\vect{h}(\vect{a}) = \vect{d}_i$ if and only if $g(\vect{a}) = f(\vect{d}_i)$. It is clear that $g = f \circ \vect{h}$. Let $\vect{a}, \vect{b} \in A^m$. Suppose $\bigl( \vect{h}(\vect{a}), \vect{h}(\vect{b}) \bigr) \notin \rho^n$. Since $\{a, b\}^2 \subseteq \rho$, $\{a\} \times \nset{3} \subseteq \rho$ and $(\vect{d}_2, \vect{d}_3) \in \rho$, we see that $\vect{h}(\vect{a}) = \vect{d}_3$. By the definition of $\vect{h}$, $g(\vect{a}) = f(\vect{d}_3) = \gamma$. Since by our assumptions $\range g|_{\{a, b\}} = \{\alpha, \beta\}$, we get that $\vect{a} \notin \{a, b\}^m$. Therefore $(\vect{a}, \vect{b}) \notin \rho^m$. We conclude that $\vect{h} \in \cl{C}^n$, and hence $g \subf[\cl{C}] f$. A similar argument shows that $f \subf[\cl{C}] g$.
\quad $\Diamond$

\smallskip\noindent
\textit{Claim 4.} If $\range f = \range g = \nset{3} = \{\alpha, \beta, \gamma\}$, $\range f|_{\{a, b\}} = \range g|_{\{a, b\}} = \{\alpha, \beta\}$, $f(\bar{a}) = g(\bar{a})$ and neither $f$ nor $g$ has property (P), then $f \fequiv[\cl{C}] g$.

\smallskip\noindent
\textit{Proof of Claim 4.} Let $\vect{d}_1 = \bar{a}$, $\vect{d}_2 \in \{a, b\}^n$, $\vect{d}_3 \in A^n \setminus \{a, b\}^n$ be such that $f(\vect{d}_1) = \alpha$, $f(\vect{d}_2) = \beta$, $f(\vect{d}_3) = \gamma$. Define the mapping $\vect{h} \colon A^m \to A^n$ by the rule $\vect{h}(\vect{a}) = \vect{d}_i$ if and only if $g(\vect{a}) = f(\vect{d}_i)$. It is clear that $g = f \circ \vect{h}$. Let $\vect{a}, \vect{b} \in A^m$. Suppose $\bigl( \vect{h}(\vect{a}), \vect{h}(\vect{b}) \bigr) \notin \rho^n$. Since $\{a, b\}^2 \subseteq \rho$ and $\{a\} \times \nset{3} \subseteq \rho$, we see that either $\vect{h}(\vect{a}) = \vect{d}_3$ or $\vect{h}(\vect{a}) = \vect{d}_2$ and $\vect{h}(\vect{b}) = \vect{d}_3$. In the former case, $g(\vect{a}) = f(\vect{d}_3) = \gamma$ by the definition of $\vect{h}$. By our assumption that $\range g|_{\{a, b\}} = \{\alpha, \beta\}$, we get that $\vect{a} \in A^m \setminus \{a, b\}^m$, and hence $(\vect{a}, \vect{b}) \notin \rho^m$. In the latter case, $g(\vect{a}) = f(\vect{d}_2) = \beta$ and $g(\vect{b}) = f(\vect{d}_3) = \gamma$ by the definition of $\vect{h}$. By our assumption that $g$ does not have property (P), we get that $(\vect{a}, \vect{b}) \notin \rho^m$. We conclude that $\vect{h} \in \cl{C}^n$, and hence $g \subf[\cl{C}] f$. A similar argument shows that $f \subf[\cl{C}] g$.
\quad $\Diamond$

\smallskip
Every operation $f$ falls into one of the types prescribed in Claims 1--4:
\begin{compactitem}
\item $\range f = \range f|_{\{a,b\}}$,
\item $\range f \neq \range f|_{\{a,b\}} = \{\alpha\}$,
\item $\range f = \nset{3}$, $\range f|_{\{a,b\}} = \{\alpha, \beta\}$ and $f$ has property (P),
\item $\range f = \nset{3}$, $\range f|_{\{a,b\}} = \{\alpha, \beta\}$ and $f$ does not have property (P),
\end{compactitem}
and there are only finitely many possibilities for $\range f$, $\range f|_{\{a,b\}}$ and $f(\bar{a})$. We conclude that there are only a finite number of $\fequiv[\cl{C}]$-classes.
\end{proof}

\begin{lemma}
\label{ClonesCase26}
Let $A = \nset{3} = \{a, b, c\}$. For the relation
\[
\rho = \begin{pmatrix} a & b & a & b & a & b & a & b \\ a & b & a & b & b & a & a & b \\ a & b & b & a & c & c & c & c \end{pmatrix}
\]
in line 26 of Table \ref{table-submaxcl3}, $\Pol \rho \notin \FF_\nset{3}$.
\end{lemma}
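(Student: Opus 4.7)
The plan is to construct an infinite family of $\cl{C}$-inequivalent operations in $\cl{C} := \Pol \rho$, where $\rho$ is the ternary relation of the lemma. I first analyze $\cl{C}$: since $\rho \subseteq \{a,b\}^2 \times \nset{3}$, any $f \in \cl{C}^{(n)}$ satisfies $f(\{a,b\}^n) \subseteq \{a,b\}$. Moreover, the preservation condition -- which, whenever $\vect{v}_1, \vect{v}_2 \in \{a,b\}^n$ satisfy $f(\vect{v}_1) \neq f(\vect{v}_2)$ and $\vect{v}_3 \in \nset{3}^n$ has $v_{3,i} = c$ on every coordinate where $\vect{v}_1$ and $\vect{v}_2$ disagree, demands that $f(\vect{v}_3) = c$ -- implies that if $f_0 := f|_{\{a,b\}^n}$ essentially depends on the variables in $E \subseteq [n]$, then $f(\vect{v}) = c$ is forced whenever $v_j = c$ for some $j \in E$; on the remaining inputs $\vect{v} \in \nset{3}^n \setminus \{a,b\}^n$ (those with $v_j \in \{a,b\}$ for all $j \in E$) the value $f(\vect{v})$ is entirely free in $\nset{3}$.

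Using this flexibility, I would define a family $\{f_n\}_{n \geq 2}$ of $n$-ary operations in $\cl{C}$ all sharing a fixed, simple Boolean restriction $f_0$ (for instance the first projection, so that $E = \{1\}$), but whose values on the free inputs -- those $\vect{a}$ with $a_1 \in \{a,b\}$ and at least one coordinate equal to $c$ among positions $2, \ldots, n$ -- encode an $n$-dependent combinatorial pattern that cannot be replicated by a lower-arity operation through any $\cl{C}$-substitution. A natural candidate is to let $f_n$ take the value $b$ exactly on a specific collection of free inputs whose size or internal structure grows with $n$, while taking value $a$ on the remaining free inputs.

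To separate the classes, I would identify a $\cl{C}$-invariant $\iota(f)$ taking infinitely many values on $\{f_n\}$. Such an invariant must exploit a rigidity imposed by $\rho$ that is \emph{not} captured by the coarse invariants used in the proof of Lemma~\ref{ClonesCase24} -- namely $\range f$, $\range(f|_{\{a,b\}^n})$, and $f(\bar{a})$ -- since each of these takes only finitely many values. The main obstacle is precisely this: the restriction $\cl{C}|_{\{a,b\}}$ coincides with $\cl{O}_{\{a,b\}}$, and the off-$\{a,b\}^n$ behavior of operations in $\cl{C}$ admits broad freedom, so many natural invariants -- range, essential arity, fiber cardinalities -- collapse under cleverly chosen $\vect{h} \in \cl{C}^n$. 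Finding an invariant that resists all $\cl{C}$-substitutions, and exhibiting the matching family $\{f_n\}$ so that $\iota(f_n) \neq \iota(f_m)$ for $n \neq m$, is the crux of the proof.
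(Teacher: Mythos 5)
Your structural analysis of $\cl{C}=\Pol\rho$ is correct and useful: operations in $\cl{C}$ preserve $\{a,b\}$, the restriction to $\{a,b\}$ is unconstrained, and for $\vect{v}\notin\{a,b\}^n$ the value is forced to $c$ exactly when some coordinate of $\vect{v}$ equal to $c$ lies in the essential-variable set $E$ of $f|_{\{a,b\}^n}$, and is free otherwise. But the proposal stops exactly where the proof has to begin: you explicitly defer both the construction of the family $\{f_n\}$ and the identification of a separating invariant, which is the entire content of the lemma. Worse, the specific plan you sketch cannot be repaired as stated. If all the $f_n$ share the Boolean restriction $p_1$ (so $E=\{1\}$) and differ only on the free region, then they are all $\cl{C}$-equivalent: take $\vect{h}=(h_1,\dotsc,h_m)$ with $h_1|_{\{a,b\}^n}=p_1^{(n)}$ and $h_j|_{\{a,b\}^n}$ constant for $j\geq 2$. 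By your own characterization each $h_j$ with $j\geq 2$ has empty essential-variable set on $\{a,b\}^n$ and is therefore completely unconstrained off $\{a,b\}^n$, while $h_1$ is unconstrained there except on $\{v_1=c\}$; hence on every free input of $f_n$ the tuple $\vect{h}(\vect{v})$ may be chosen to be an arbitrary element of $\nset{3}^m$, in particular one where $f_m$ takes the value $f_n(\vect{v})$. So $f_n\fequiv[\cl{C}]f_m$ no matter how the free values are chosen, and the ``$n$-dependent combinatorial pattern'' is erased. Pushing $E$ up to all of $\{1,\dotsc,n\}$ does not help either, since then the free region is empty. The obstacle you name at the end is thus not merely the hard part of the proof; it defeats the approach in the form you propose.

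The paper's proof avoids this trap by not requiring the witnesses to lie in $\cl{C}$ (the relation $\fequiv[\cl{C}]$ is defined on all of $\cl{O}_A$, and indeed the paper's $f_n$ take all three values on $\{a,b\}^n$, so they are not in $\Pol\rho$). Instead of hiding information in the free region, it places the nonzero values of $f_n$ on a rigid configuration of supports: the tuples $\vect{e}_i^n\in\{a,b\}^n$ (a single $a$ in position $i$) and $\vect{d}_i^n$ (two cyclically adjacent $c$'s), with values chosen so that any $\vect{h}\in(\cl{C}^{(n)})^m$ witnessing $f_n=f_m\circ\vect{h}$ must send $\vect{e}_i^n\mapsto\vect{e}_{\tau(i)}^m$ and $\vect{d}_i^n\mapsto\vect{d}_{\nu(i)}^m$. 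The key rigidity is that $(\vect{e}_i^p,\vect{e}_j^p,\vect{d}_\ell^p)\in\rho^p$ with $i\neq j$ forces $\{i,j\}$ to equal the pair of $c$-positions of $\vect{d}_\ell^p$; preservation of $\rho$ then forces $\tau$ to move by at most one step along the cycle, giving $\tau(i)\leq i$ while the endpoint conditions force $\tau(n)=m>n$, a contradiction. If you want to salvage your route, you would need a family whose rigidity comes from the interaction between the $\{a,b\}$-part and the $c$-part in this coupled way, not from values placed on inputs that the clone leaves unconstrained.
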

\begin{proof}
Let $\cl{C} = \Pol \rho$. For $1 \leq i \leq n$, denote by $\vect{e}_i^n$ the $n$-tuple whose $i$-th component is $a$ and the other components are $b$. For $1 \leq i \leq n - 1$, denote by $\vect{d}_i^n$ the $n$-tuple
\[
(b, \dotsc, b, \underset{i}{c}, \underset{i+1}{c}, b, \dotsc, b)
\]
and denote by $\vect{d}_n^n$ the $n$-tuple $(c, b, b, \dots, b, c)$.

For $n \geq 3$, define the operation $f_n \colon A^n \to A$ as follows:
\[
f_n(\vect{a}) =
\begin{cases}
2, & \text{if $\vect{a} = \vect{e}_1^n$,} \\
1, & \text{if $\vect{a} = \vect{e}_i^n$ for some $i \in \{2, \dotsc, n\}$,} \\
1, & \text{if $\vect{a} = \vect{d}_i^n$ for some $i \in \{1, \dotsc, n - 1\}$,} \\
2, & \text{if $\vect{a} = \vect{d}_n^n$,} \\
0, & \text{otherwise.}
\end{cases}
\]
We claim that $f_n \not\fequiv[\cl{C}] f_m$ whenever $n \neq m$, and hence there are infinitely many $\fequiv[\cl{C}]$-classes. For, let $n < m$, and suppose on the contrary that there exists a map $\vect{h} \in (\cl{C}^{(n)})^m$ such that $f_n = f_m \circ \vect{h}$. Since every operation in $\cl{C}$ preserves $\{a, b\}$, $\vect{h}$ maps $\{a, b\}^n$ into $\{a, b\}^m$. Thus, there is a map $\tau \colon \{1, \dotsc, n\} \to \{1, \dotsc, m\}$ such that $\tau(1) = 1$, $\tau(i) \neq 1$ for $i \neq 1$ and $\vect{h}(\vect{e}_i^n) = \vect{e}_{\tau(i)}^m$ for all $i \in \{1, \dotsc, n\}$.

We have that $\vect{h}(\vect{d}_n^n) \in \{\vect{e}_1^m, \vect{d}_m^m\}$. Suppose that $\vect{h}(\vect{d}_n^n) = \vect{e}_1^m$. Then $\bigl( \bar{b}, \vect{e}_n^n, \vect{d}_n^n \bigr) \in \rho^n$, but the $m$-tuples $\vect{h}(\bar{b})$, $\vect{h}(\vect{e}_n^n) = \vect{e}_{\tau(n)}^m$, $\vect{h}(\vect{d}_n^n) = \vect{e}_1^m$ are all in $\{a, b\}^m$ and $\vect{h}(\bar{b}) \neq \vect{e}_{\tau(n)}^m$ since $f_m \bigl( \vect{h}(\bar{b}) \bigr) = f_n(\bar{b}) = 0$, $f_m(\vect{e}_{\tau(n)}^m) = 1$. Hence $\bigl( \vect{h}(\bar{b}), \vect{h}(\vect{e}_n^n), \vect{h}(\vect{d}_n^n) \bigr) \notin \rho^m$, which contradicts the assumption that $\vect{h} \in \cl{C}^m$. Thus, $\vect{h}(\vect{d}_n^n) = \vect{d}_m^m$.

For each $i$ ($1 \leq i \leq n - 1$), we have that $\vect{h}(\vect{d}_i^n) \in \{\vect{e}_2^m, \dotsc, \vect{e}_m^m, \vect{d}_1^m, \dotsc, \vect{d}_{m-1}^m\}$. Suppose that there is an $i \in \{1, \dotsc, n-1\}$ such that $\vect{h}(\vect{d}_i^n) = \vect{e}_j^m$ for some $j \in \{2, \dotsc, m\}$. Then $\bigl( (\bar{b}), \vect{e}_i^n, \vect{d}_i^n \bigr) \in \rho^n$, but the $m$-tuples $\vect{h}(\bar{b})$, $\vect{h}(\vect{e}_i^n) = \vect{e}_{\tau(i)}^m$, $\vect{h}(\vect{d}_i^n) = \vect{e}_j^m$ are all in $\{a, b\}^m$ and $\vect{h}(\bar{b}) \neq \vect{e}_{\tau(i)}^m$ since $f_m \bigl( \vect{h}(\bar{b}) \bigr) = f_n(\bar{b}) = 0$, $f_m(\vect{e}_{\tau(i)}^m) \neq 0$. Hence $\bigl( \vect{h}(\bar{b}), \vect{h}(\vect{e}_i^n), \vect{h}(\vect{d}_i^n) \bigr) \notin \rho^m$, which contradicts the assumption that $\vect{h} \in \cl{C}^m$. We conclude that there exists a map $\nu \colon \{1, \dotsc, n\} \to \{1, \dotsc, m\}$ such that $\nu(n) = m$, $\nu(i) \neq m$ for $i \neq n$ and $\vect{h}(\vect{d}_i^n) = \vect{d}_{\nu(i)}^m$ for all $i \in \{1, \dotsc, n\}$.

It is easy to verify that for all $p \geq 3$, $(\vect{e}_i^p, \vect{e}_j^p, \vect{d}_\ell^p) \in \rho^p$ if and only if $\{i, j\} \subseteq \{\ell, \ell+1\}$ and $\ell < p$ or $\{i, j\} \subseteq \{1, p\}$ and $\ell = p$. Since $(\vect{e}_1^n, \vect{e}_1^n, \vect{d}_1^n) \in \rho^n$ and $\vect{h}(\vect{e}_1^n) = \vect{e}_1^m$, we have that $(\vect{e}_1^m, \vect{e}_1^m, \vect{d}_{\nu(1)}^m) = \bigl( \vect{h}(\vect{e}_1^n), \vect{h}(\vect{e}_1^n), \vect{h}(\vect{d}_1^n) \bigr) \in \rho^m$. By the previous observation, $\nu(1) \in \{1, m\}$, but since we have that $\nu(1) \neq m$, we conclude that $\nu(1) = 1$. Similarly, $(\vect{e}_n^n, \vect{e}_n^n, \vect{d}_n^n) \in \rho$ and $\vect{h}(\vect{d}_n^n) = \vect{d}_m^m$ imply that $(\vect{e}_{\tau(n)}^m, \vect{e}_{\tau(n)}^m, \vect{d}_m^m) \in \rho^m$. It follows from the previous observation that $\tau(n) \in \{1, m\}$, but since $\tau(n) \neq 1$, we have that $\tau(n) = m$. Similarly, for $1 \leq i \leq n-1$, $(\vect{e}_i^n, \vect{e}_{i+1}^n, \vect{d}_i^n) \in \rho^n$ implies $(\vect{e}_{\tau(i)}^m, \vect{e}_{\tau(i+1)}^m, \vect{d}_{\nu(i)}^m) \in \rho^m$, and from the previous observation and the fact that $\nu(i) \neq m$ when $i \neq n$ it follows that $\{\tau(i), \tau(i+1)\} \subseteq \{\nu(i), \nu(i) + 1\}$. Thus, $\tau(i + 1) \leq \tau(i) + 1$, and hence $\tau(i) \leq i$ for all $i \in \{1, \dotsc, n\}$. Then $\tau(n) \leq n < m = \tau(n)$, and we have reached the desired contradiction.
\end{proof}

\begin{lemma}
\label{ClonesCase27}
Let $A = \nset{3} = \{a, b, c\}$. For the relation
\[
\rho =
\begin{pmatrix}
a & b & b & a & a & b & b & a & a & b \\
a & b & a & b & a & b & a & b & a & b \\
a & b & a & a & b & a & b & b & c & c
\end{pmatrix}
\]
in line 27 of Table \ref{table-submaxcl3}, $\Pol \rho \in \FF_\nset{3}$.
\end{lemma}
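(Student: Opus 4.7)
The plan is to exhibit a complete $\fequiv[\cl{C}]$-invariant that takes only finitely many values.  Writing $\cl{C} = \Pol \rho$, I would use the pair $(R(f), R'(f)) := (\range f, \range f|_{\{a, b\}^n})$ as that invariant and show (i) that the pair is $\cl{C}$-invariant, (ii) that at most ten values are possible, and (iii) that two operations sharing the same pair are $\cl{C}$-equivalent.

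First, I would unpack $\rho$.  Every triple in $\rho$ has its first two entries in $\{a, b\}$, and whenever the third entry equals $c$ the first two coincide.  Thus $(\vect{x}, \vect{y}, \vect{z}) \in \rho^n$ if and only if $\vect{x}, \vect{y} \in \{a, b\}^n$ and $\vect{x}_i = \vect{y}_i$ at every position $i$ where $\vect{z}_i = c$.  This translates into the following membership criterion: $f \in \cl{C}$ iff $f(\{a, b\}^n) \subseteq \{a, b\}$ and, whenever $f(\vect{z}) = c$, the tuple $\vect{z}$ carries $c$ at every essential position of the restriction $f|_{\{a, b\}^n}$.  As a corollary every operation in $\cl{C}$ preserves $\{a, b\}$, so the relation $f \subf[\cl{C}] g$ forces $R(f) \subseteq R(g)$ and $R'(f) \subseteq R'(g)$; hence $(R, R')$ is an invariant, and the constraints $\emptyset \neq R' \subseteq R \cap \{a, b\}$ allow it to take only ten values.

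For the main step I would fix operations $f \in \cl{C}^{(n)}$ and $g \in \cl{C}^{(m)}$ with matching $(R, R')$ and look for $\vect{h} \in (\cl{C}^{(n)})^m$ with $g \circ \vect{h} = f$ (the reverse direction follows by the symmetric argument).  The construction is to pick, for each $\alpha \in R$, an $m$-tuple $\vect{d}_\alpha \in A^m$ with $g(\vect{d}_\alpha) = \alpha$, requiring $\vect{d}_\alpha \in \{a, b\}^m$ whenever $\alpha \in R'$ (which is possible because $R' = g(\{a, b\}^m)$), and to set $\vect{h}(\vect{x}) = \vect{d}_{f(\vect{x})}$.  Then $g \circ \vect{h} = f$ by construction.

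The main obstacle will be verifying that each component $h_i$ of $\vect{h}$ lies in $\cl{C}$.  Preservation of $\{a, b\}$ is immediate from the choice of representatives.  The interesting case is the $c$-constraint, where I would split according to whether $h_i|_{\{a, b\}^n}$ is constant: if it is --- which happens automatically when $R'$ is a singleton, and in the case $R' = \{a, b\}$ whenever $(\vect{d}_a)_i = (\vect{d}_b)_i$ --- then $h_i|_{\{a, b\}^n}$ has no essential variables and the constraint is vacuous.  Otherwise $R' = \{a, b\}$ and $(\vect{d}_a)_i \neq (\vect{d}_b)_i$, making $h_i|_{\{a, b\}^n}$ a non-trivial relabelling of $f|_{\{a, b\}^n}$; its essential positions thus coincide with those of $f|_{\{a, b\}^n}$, and any $\vect{z}$ with $h_i(\vect{z}) = c$ forces $f(\vect{z}) = c$, which by the membership criterion for $f$ already places $c$-entries of $\vect{z}$ at exactly those positions.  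This closes the argument.
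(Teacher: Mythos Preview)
There is a genuine gap: your argument only treats operations $f,g$ that lie in $\cl{C}=\Pol\rho$, but the statement $\cl{C}\in\FF_{\nset{3}}$ asks for finitely many $\fequiv[\cl{C}]$-classes on \emph{all} of $\cl{O}_A$. You use $f\in\cl{C}$ twice---once implicitly when you assert $R'\subseteq\{a,b\}$, and once explicitly in the final paragraph where you invoke ``the membership criterion for $f$'' to locate the $c$-entries of $\vect z$. Neither is available for arbitrary $f$.

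Worse, the invariant $(R,R')$ is \emph{not} complete on $\cl{O}_A$. Take the binary operation $f$ with $f(x,y)=x$ for $y\in\{a,b\}$ and $f(x,c)=c$, and take $g=\id_A$. Both have $(R,R')=(\{a,b,c\},\{a,b\})$. Yet $f$ satisfies the following property, while $g$ does not: there exist $\vect{u},\vect{v}\in\{a,b\}^n$ and $\vect{w}\in A^n$ with $f(\vect{u}),f(\vect{v})$ distinct elements of $R'$, $f(\vect{w})\notin R'$, and $(\vect{u},\vect{v},\vect{w})\in\rho^n$ (for $f$, take $\vect{u}=(a,a)$, $\vect{v}=(b,a)$, $\vect{w}=(a,c)$). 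This property is pushed up along $\subf[\cl{C}]$: if $f=g\circ\vect{h}$ with $\vect{h}\in\cl{C}^m$, then $(\vect{h}(\vect{u}),\vect{h}(\vect{v}),\vect{h}(\vect{w}))$ witnesses the property for $g$. Hence $f\not\subf[\cl{C}]\id$, so $f\not\fequiv[\cl{C}]\id$ despite sharing the same $(R,R')$. Your own membership criterion shows why operations with this property never lie in $\cl{C}$ when $R'=\{a,b\}$, which is exactly why you did not see the obstruction.

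The paper's proof fixes this by adding precisely this extra bit (called property~(Q)) to the invariant in the case $R=\nset{3}$, $|R'|=2$, and then proving equivalence separately within the (Q) and non-(Q) subclasses. Your construction of $\vect{h}$ and your verification strategy via the componentwise membership criterion are otherwise sound and close in spirit to the paper's approach; you just need the additional case split to handle operations outside $\cl{C}$.
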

\begin{proof}
Let $\cl{C} = \Pol \rho$. Observe first that every operation in $\cl{C}$ preserves the subset $\{a, b\}$. Note also that $(\vect{a}, \vect{b}, \vect{c}) \in \rho^n$ if and only if $(\vect{b}, \vect{a}, \vect{c}) \in \rho^n$. Also, if $\vect{a} \notin \{a, b\}^n$ or $\vect{b} \notin \{a, b\}^n$, then $(\vect{a}, \vect{b}, \vect{c}) \notin \rho^n$. In the following, let $f$ and $g$ be $n$-ary and $m$-ary, respectively.

\smallskip\noindent
\textit{Claim 1.} If $\range f = \range g = \range f|_{\{a, b\}} = \range g|_{\{a, b\}}$, then $f \fequiv[\cl{C}] g$.

\smallskip\noindent
\textit{Proof of Claim 1.} Let $r = \card{\range f}$, and let $\{\vect{d}_1, \dotsc, \vect{d}_r\} \subseteq \{a, b\}^n$ be a transversal of $\ker f$. Define the mapping $\vect{h} \colon A^m \to A^n$ by the rule $\vect{h}(\vect{a}) = \vect{d}_i$ if and only if $g(\vect{a}) = f(\vect{d}_i)$. It is clear that $g = f \circ \vect{h}$. Since $\{a, b\}^3 \subseteq \rho$, we have that $(\vect{d}_i, \vect{d}_j, \vect{d}_\ell) \in \rho^n$ for all $i, j, \ell \in \{1, \dotsc, r\}$ and hence $\vect{h} \in \cl{C}^n$. Thus, $g \subf[\cl{C}] f$. A similar argument shows that $f \subf[\cl{C}] g$.
\quad $\Diamond$

\smallskip\noindent
\textit{Claim 2.} If $\range f = \range g \neq \range f|_{\{a, b\}} = \range g|_{\{a, b\}} = \{\alpha\}$, then $f \fequiv[\cl{C}] g$.

\smallskip\noindent
\textit{Proof of Claim 2.} Let $r = \card{\range f}$, and let $\{\vect{d}_1, \vect{d}_2, \dotsc, \vect{d}_r\}$  be a transversal of $\ker f$ such that $\vect{d}_1 = \bar{a}$. Define the mapping $\vect{h} \colon A^m \to A^n$ by the rule $\vect{h}(\vect{a}) = \vect{d}_i$ if and only if $g(\vect{a}) = f(\vect{d}_i)$. It is clear that $g = f \circ \vect{h}$. Let $\vect{a}, \vect{b}, \vect{c} \in A^m$, and let $\vect{h}(\vect{a}) = \vect{d}_i$, $\vect{h}(\vect{b}) = \vect{d}_j$, $\vect{h}(\vect{c}) = \vect{d}_\ell$. Suppose $(\vect{d}_i, \vect{d}_j, \vect{d}_\ell) \notin \rho^n$. Since $\{a, b\}^3 \subseteq \rho$, we have that one of $\vect{d}_i$, $\vect{d}_j$, $\vect{d}_\ell$ is not in $\{a, b\}^n$. If $\vect{d}_\ell \notin \{a, b\}^n$, then $\vect{d}_i$ and $\vect{d}_j$ cannot both be equal to $\vect{d}_1 = \bar{a}$, because $(\vect{d}_1, \vect{d}_1, \vect{d}_\ell) \in \rho^n$. By assumption, $f|_{\{a, b\}}$ is constant $\alpha$, so $\{a, b\}^n$ is contained in a single kernel class of $f$, which by our choice is represented by $\vect{d}_1$. Thus, it actually holds that $\vect{d}_i \notin \{a, b\}^n$ or $\vect{d}_j \notin \{a, b\}^n$. By the definition of $\vect{h}$, we have that $g(\vect{a}) = f(\vect{d}_i) \neq \alpha$ or $g(\vect{b}) = f(\vect{d}_j) \neq \alpha$, and by our assumption that $g|_{\{a,b\}}$ is constant $\alpha$ we get that $\vect{a} \notin \{a, b\}^m$ or $\vect{b} \notin \{a, b\}^m$. Therefore $(\vect{a}, \vect{b}, \vect{c}) \notin \rho^m$, and we conclude that $\vect{h} \in \cl{C}^n$. Hence $g \subf[\cl{C}] f$. A similar argument shows that $f \subf[\cl{C}] g$.
\quad $\Diamond$

\smallskip
We say that $f \colon A^n \to A$ \emph{has property \textnormal{(Q)},} if it satisfies the following condition:
\begin{itemize}
\item[(Q)] $\range f = \nset{3} = \{\alpha, \beta, \gamma\}$, $\range f|_{\{a, b\}} = \{\alpha, \beta\}$ and there are $n$-tuples $\vect{a}, \vect{b} \in \{a, b\}^n$, $\vect{c} \in A^n$ such that $f(\vect{a}) = \alpha$, $f(\vect{b}) = \beta$, $f(\vect{c}) = \gamma$ and $(\vect{a}, \vect{b}, \vect{c}) \in \rho^n$.
\end{itemize}

\noindent
\textit{Claim 3.} If $\range f = \range g = \nset{3} = \{\alpha, \beta, \gamma\}$, $\range f|_{\{a, b\}} = \range g|_{\{a, b\}} = \{\alpha, \beta\}$ and both $f$ and $g$ have property (Q), then $f \fequiv[\cl{C}] g$.

\smallskip\noindent
\textit{Proof of Claim 3.} Let $\vect{d}_1 \in \{a, b\}^n$, $\vect{d}_2 \in \{a, b\}^n$, $\vect{d}_3 \in A^n \setminus \{a, b\}^n$ be such that $f(\vect{d}_1) = \alpha$, $f(\vect{d}_2) = \beta$, $f(\vect{d}_3) = \gamma$ and $(\vect{d}_1, \vect{d}_2, \vect{d}_3) \in \rho^n$---such $n$-tuples exist by the assumption that $f$ has property (Q). Define the mapping $\vect{h} \colon A^m \to A^n$ by the rule that $\vect{h}(\vect{a}) = \vect{d}_i$ if and only if $g(\vect{a}) = f(\vect{d}_i)$. It is clear that $g = f \circ \vect{h}$. Let $\vect{a}, \vect{b}, \vect{c} \in A^m$. Suppose $\bigl( \vect{h}(\vect{a}), \vect{h}(\vect{b}), \vect{h}(\vect{c}) \bigr) \notin \rho^n$. Since $\{a, b\}^3 \subseteq \rho$, one of $\vect{h}(\vect{a})$, $\vect{h}(\vect{b})$, $\vect{h}(\vect{c})$ equals $\vect{d}_3$. It is not possible that $\vect{h}(\vect{c}) = \vect{d}_3$ and $\{\vect{h}(\vect{a}), \vect{h}(\vect{b})\} \subseteq \{\vect{d}_1, \vect{d}_2\}$, because on one hand $(\vect{x}, \vect{x}, \vect{y}) \in \rho^n$ for all $\vect{x} \in \{a, b\}^n$, $\vect{y} \in A^n$, and on the other hand, by our choice of representatives of kernel classes, $(\vect{d}_1, \vect{d}_2, \vect{d}_3) \in \rho^n$ and hence also $(\vect{d}_2, \vect{d}_1, \vect{d}_3) \in \rho^n$. Thus we have in fact that $\vect{h}(\vect{a}) = \vect{d}_3$ or $\vect{h}(\vect{b}) = \vect{d}_3$. By the definition of $\vect{h}$, $g(\vect{a}) = f(\vect{d}_3) = \gamma$ or $g(\vect{b}) = f(\vect{d}_3) = \gamma$. Since by our assumptions $\range g|_{\{a, b\}} = \{\alpha, \beta\}$, we get that $\vect{a} \notin \{a, b\}^m$ or $\vect{b} \notin \{a, b\}^m$, and hence $(\vect{a}, \vect{b}, \vect{c}) \notin \rho^m$. Therefore $\vect{h} \in \cl{C}^n$, and we conclude that $g \subf[\cl{C}] f$. A similar argument shows that $f \subf[\cl{C}] g$.
\quad $\Diamond$

\smallskip\noindent
\textit{Claim 4.} If $\range f = \range g = \nset{3} = \{\alpha, \beta, \gamma\}$, $\range f|_{\{a, b\}} = \range g|_{\{a, b\}} = \{\alpha, \beta\}$ and neither $f$ nor $g$ has property (Q), then $f \fequiv[\cl{C}] g$.

\smallskip\noindent
\textit{Proof of Claim 4.} Let $\vect{d}_1 \in \{a, b\}^n$, $\vect{d}_2 \in \{a, b\}^n$ and $\vect{d}_3 \in A^n \setminus \{a, b\}^n$ be $n$-tuples such that $f(\vect{d}_1) = \alpha$, $f(\vect{d}_2) = \beta$, $f(\vect{d}_3) = \gamma$. Define the mapping $\vect{h} \colon A^m \to A^n$ by the rule $\vect{h}(\vect{a}) = \vect{d}_i$ if and only if $g(\vect{a}) = f(\vect{d}_i)$. It is clear that $g = f \circ \vect{h}$. Let $\vect{a}, \vect{b}, \vect{c} \in A^m$. Suppose $\bigl( \vect{h}(\vect{a}), \vect{h}(\vect{b}), \vect{h}(\vect{c}) \bigr) \notin \rho^n$. Since $\{a, b\}^3 \subseteq \rho$, one of $\vect{h}(\vect{a})$, $\vect{h}(\vect{b})$, $\vect{h}(\vect{c})$ equals $\vect{d}_3$. If $\vect{h}(\vect{a}) = \vect{d}_3$, then we get by the definition of $\vect{h}$ that $g(\vect{a}) = f(\vect{d}_3) = \gamma$, and by the assumption that $\range g|_{\{a,b\}} = \{\alpha, \beta\}$, we have that $\vect{c} \notin \{a, b\}^m$; thus $(\vect{a}, \vect{b}, \vect{c}) \notin \rho^m$. If $\vect{h}(\vect{b}) = \vect{d}_3$, then a similar argument shows that $(\vect{a}, \vect{b}, \vect{c}) \notin \rho^m$.

Assume then that none of $\vect{h}(\vect{a})$ and $\vect{h}(\vect{b})$ equals $\vect{d}_3$ but $\vect{h}(\vect{c}) = \vect{d}_3$. We must have $\vect{h}(\vect{a}) \neq \vect{h}(\vect{b})$, for otherwise $\bigl( \vect{h}(\vect{a}), \vect{h}(\vect{b}), \vect{h}(\vect{c}) \bigr) \in \rho^n$. Assume that $\vect{h}(\vect{a}) = \vect{d}_1$ and $\vect{h}(\vect{b}) = \vect{d}_2$. By the definition of $\vect{h}$ we get that $g(\vect{a}) = f(\vect{d}_1) = \alpha$, $g(\vect{b}) = f(\vect{d}_2) = \beta$, $g(\vect{c}) = f(\vect{d}_3) = \gamma$. By the assumption that $\range g|_{\{a,b\}} = \{\alpha, \beta\}$, we have that $\vect{c} \notin \{a, b\}^m$. If $\vect{a} \notin \{a, b\}^m$ or $\vect{b} \notin \{a, b\}^m$, then $(\vect{a}, \vect{b}, \vect{c}) \notin \rho^m$, so we can assume that $\vect{a}, \vect{b} \in \{a, b\}^m$. But then the assumption that $g$ does not have property (Q) implies that $(\vect{a}, \vect{b}, \vect{c}) \notin \rho^m$. In the only remaining case when $\vect{h}(\vect{a}) = \vect{d}_2$, $\vect{h}(\vect{b}) = \vect{d}_1$, $\vect{h}(\vect{c}) = \vect{d}_3$, we can deduce in a similar way that $(\vect{a}, \vect{b}, \vect{c}) \notin \rho^m$, taking into account that $(\vect{a}, \vect{b}, \vect{c}) \in \rho^m$ if and only if $(\vect{b}, \vect{a}, \vect{c}) \in \rho^m$.

We conclude that $\vect{h} \in \cl{C}^n$, and hence $g \subf[\cl{C}] f$. A similar argument shows that $f \subf[\cl{C}] g$.
\quad $\Diamond$

\smallskip
Every operation $f$ falls into one of the types prescribed in Claims 1--4:
\begin{compactitem}
\item $\range f = \range f|_{\{a,b\}}$,
\item $\range f \neq \range f|_{\{a,b\}} = \{\alpha\}$,
\item $\range f = \nset{3}$, $\range f|_{\{a,b\}} = \{\alpha, \beta\}$ and $f$ has property (Q),
\item $\range f = \nset{3}$, $\range f|_{\{a,b\}} = \{\alpha, \beta\}$ and $f$ does not have property (Q),
\end{compactitem}
and there are only finitely many possibilities for $\range f$ and $\range f|_{\{a,b\}}$. We conclude that there are only a finite number of $\fequiv[\cl{C}]$-classes.
\end{proof}

\begin{lemma}
\label{ClonesCase32}
Let $A = \nset{3} = \{a, b, c\}$, and let $\phi \colon \nset{3} \to \{0, 1\}$ be the map $a \mapsto 0$, $b \mapsto 0$, $c \mapsto 1$. For the relation $\rho = \phi^{-1} \circ \pi_2^{01} \circ \phi$ in line 32 of Table \ref{table-submaxcl3}, $\Pol \rho \in \FF_\nset{3}$.
\end{lemma}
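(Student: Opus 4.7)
The plan is to show that $\cl C = \Pol \rho$ has exactly four $\fequiv[\cl C]$-classes. First I would note that $\rho = \{(x, y) \in \nset{3}^2 \colon \phi(x) \neq \phi(y)\}$, and then argue in the usual way (if $\phi^n(\vect x) = \phi^n(\vect x')$, compare both with any $\vect y$ of coordinatewise complementary $\phi$-image) that for every $f \in \cl C^{(n)}$ the composition $\phi \circ f$ factors as $\tilde f \circ \phi^n$ for a self-dual Boolean function $\tilde f \colon \{0, 1\}^n \to \{0, 1\}$, in the sense $\tilde f(\mathbf 1 - \vect u) = 1 - \tilde f(\vect u)$. Self-duality forces $\tilde f$ to be nonconstant, so $c \in \range f$ and $\range f \cap \{a, b\} \neq \emptyset$; moreover, if $\range f$ misses $a$ or $b$ then $f$ is automatically constant on every $\phi$-fiber. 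I call $f$ \emph{$\phi$-only} if $f(\vect x)$ depends only on $\phi^n(\vect x)$; this property is $\fequiv[\cl C]$-invariant, since every $\cl C$-minor of a $\phi$-only operation is again $\phi$-only.

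The operations thus split into four types: (I) $\range f = \{a, c\}$; (II) $\range f = \{b, c\}$; (III.a) $\range f = \nset{3}$ and $f$ is $\phi$-only; (III.b) $\range f = \nset{3}$ and $f$ is not $\phi$-only. I would take as canonical representatives the unary $\alpha$ with $\alpha(a) = \alpha(b) = a$, $\alpha(c) = c$ for (I); the analogous $\beta$ for (II); the identity for (III.b); and, for (III.a), the binary $f^*(x, y) = \hat f^*(\phi(x), \phi(y))$ with $\hat f^*(0, 0) = a$, $\hat f^*(1, 0) = b$, $\hat f^*(0, 1) = \hat f^*(1, 1) = c$. For cases (I) and (II), the six unary operations in $\cl C^{(1)}$ are exactly those with trace $(\alpha, \beta, c)$ (any $\alpha, \beta \in \{a, b\}$) or $(c, c, \gamma)$ (any $\gamma \in \{a, b\}$); self-duality of $\tilde f$ lets me pair any $\vect v_a \in f^{-1}(a)$ with some $\vect v_c \in f^{-1}(c)$ having $\phi^n(\vect v_c) = \mathbf 1 - \phi^n(\vect v_a)$, and the columns of $(\vect v_a, \vect v_a, \vect v_c)$ produce exactly the admissible traces, so $\alpha = f \circ (h_1, \dotsc, h_n)$ holds for suitable $h_i \in \cl C^{(1)}$. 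The reverse $f \subf[\cl C] \alpha$ is immediate since $\alpha$ acts as the identity on $\range f = \{a, c\}$. For (III.b), failure of $\phi$-onlyness produces $\vect x_1, \vect x_2$ with equal $\phi$-image but $\{f(\vect x_1), f(\vect x_2)\} = \{a, b\}$, and together with any $\vect v_c$ of complementary $\phi$-image (automatically in $f^{-1}(c)$ by self-duality) the same trace analysis yields $\id = f \circ (h_1, \dotsc, h_n)$.

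The main obstacle is case (III.a): every $\phi$-only $f$ with full range must be shown $\fequiv[\cl C] f^*$. Since both sides factor through $\phi$, the equation $f = f^* \circ (h_1, h_2)$ collapses to $\hat f(\vect u) = \hat f^*(\tilde h_1(\vect u), \tilde h_2(\vect u))$ for all $\vect u \in \{0, 1\}^n$, with $\tilde h_1, \tilde h_2$ self-dual. I would set $\tilde h_2 = \tilde f$ to handle the $c$-outputs of $\hat f^*$; then $\tilde h_1(\vect u)$ is forced to $0$ on $\hat f^{-1}(a)$ and to $1$ on $\hat f^{-1}(b)$, and on $\hat f^{-1}(c)$ it is uniquely determined by the self-duality identity $\tilde h_1(\vect u) = 1 - \tilde h_1(\mathbf 1 - \vect u)$, since self-duality of $\tilde f$ places $\mathbf 1 - \vect u$ into $\hat f^{-1}(a) \cup \hat f^{-1}(b)$. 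The reverse $f^* \subf[\cl C] f$ is similar: choosing $\vect p \in \hat f^{-1}(a)$ and $\vect r \in \hat f^{-1}(b)$, set $\tilde{\vect h}(0, 0) = \vect p$, $\tilde{\vect h}(0, 1) = \mathbf 1 - \vect r$, and extend by self-duality; the remaining two values automatically land in $\hat f^{-1}(c)$ and $\hat f^{-1}(b)$. Lifting each resulting self-dual $\tilde h_i$ to an element of $\cl C$ by any $\{a, b\}$-assignment on $\tilde h_i^{-1}(0)$ completes the construction. Having covered all four types, I conclude that $\cl C$ has exactly four $\fequiv[\cl C]$-classes, and therefore $\cl C \in \FF_\nset{3}$.
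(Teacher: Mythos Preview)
Your argument has a genuine gap: the relation $\fequiv[\cl C]$ is defined on all of $\cl{O}_A$, not just on $\cl C$, and the question is whether $\cl{O}_A$ has finitely many $\fequiv[\cl C]$-classes. Every structural fact you derive about $f$---that $\phi \circ f$ factors through $\phi^n$, that the induced $\tilde f$ is self-dual, that $c \in \range f$ and $\range f \cap \{a,b\} \neq \emptyset$, that $f$ is $\phi$-only whenever its range misses $a$ or $b$---is a property of operations \emph{in} $\cl C$, obtained from the assumption that $f$ preserves $\rho$. Your four types therefore cover only $\cl C$, not $\cl{O}_A$: operations with range $\{a\}$, $\{b\}$, $\{c\}$, or $\{a,b\}$ are omitted outright, and even within range $\{a,c\}$ you miss every non-$\phi$-only operation. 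The step ``$f \subf[\cl C] \alpha$ is immediate since $\alpha$ acts as the identity on $\{a,c\}$'' is exactly where the hidden assumption bites: writing $f = \alpha \circ f$ requires the inner function $f$ to lie in $\cl C$.

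Concretely, take the unary $f$ with $f(a) = a$, $f(b) = c$, $f(c) = a$. Its range is $\{a,c\}$, so by your scheme it should be $\fequiv[\cl C]$-equivalent to $\alpha$, but in fact $f \not\subf[\cl C] \alpha$: any $h \in \cl C^{(1)}$ with $\alpha \circ h = f$ would need $h(a) \in \{a,b\}$ and $h(b) = c$, hence $\phi(h(a)) \neq \phi(h(b))$, contradicting that $\phi \circ h$ factors through $\phi$. Likewise the constant $\constf{a}$ lies in none of your four classes. So ``exactly four $\fequiv[\cl C]$-classes'' is false. The paper's proof avoids this issue by working with arbitrary $f, g \in \cl{O}_A$ from the start and assigning to each $f$ the finite invariant $\{(\range f|_B, \range f|_{B'}) : B \text{ a $\sigma^n$-block}\}$, where $B'$ is the block paired with $B$ under $\rho$; equality of this invariant forces $\cl C$-equivalence, with no hypothesis that $f$ or $g$ preserve $\rho$.
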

\begin{proof}
Let $\cl{C} = \Pol \rho$. We may think of the relation $\rho$ as a transposition of the two blocks of the equivalence relation $\epsilon_3^{ab|c}$. For notational simplicity, let $\sigma = \epsilon_3^{ab|c}$. (More precisely, this relation is the full inverse image, under the natural map $A \to A / \sigma$, of the transposition of the two elements of $A / \sigma$.) If $D$ is a block of $\sigma$, let $D'$ denote its complement (i.e., its image under the transposition of the two blocks).

For each $n$, $\sigma^n$ partitions $A^n$ into blocks of the form $B = B_1 \times B_2 \times \dotsb \times B_n$ where each $B_i$ is $\{a, b\}$ or $\{c\}$. Let $B'$ denote the block $B_1' \times B_2' \times \dotsb \times B_n'$ of $\sigma^n$.

\smallskip\noindent
\textit{Claim.} If $f$, $g$ are operations on $A$, say $f$ is $m$-ary and $g$ is $n$-ary, such that for every block $B$ of $\sigma^n$ on $A^n$ there is a block $C$ of $\sigma^m$ on $A^m$ such that
\[
\range g|_{B} \subseteq \range f|_{C} \quad \text{and} \quad \range g|_{B'} \subseteq \range f|_{C'},
\]
then there exists $\vect{h} \in \cl{C}^m$ such that $g = f \circ \vect{h}$.

\smallskip\noindent
\textit{Proof of Claim.}
$A^n$ is partitioned into disjoint sets of the form $B \cup B'$ with $B$ as above. For each such set choose $C$ according to the assumption. Then there exist $h_B \colon B \to C$ and $h_B' \colon B' \to C'$ such that $f|_C \circ h_B = g|_B$ and $f|_C' \circ h_B' = g|_B'$. Let $\vect{h}$ be the union of all $h_B \cup h_B'$. It is easy to see that $\vect{h}$ preserves $\rho$ and $f \circ \vect{h} = g$.
\quad $\Diamond$

\smallskip\noindent
\textit{Corollary.} If $f$, $g$ are operations on $A$, say $f$ is $m$-ary and $g$ is $n$-ary, such that
\[
\{(\range g|_{B}, \range g|_{B'}) : \text{$B$ is a block of $\sigma^n$ on $A^n$}\}
\]
equals
\[
\{(\range f|_{C}, \range f|_{C'}) : \text{$C$ is a block of $\sigma^m$ on $A^m$}\},
\]
then $f$ and $g$ are $\cl{C}$-equivalent.

\smallskip
Since both sets above are subsets of $\mathcal{P}(A) \times \mathcal{P}(A)$, which is finite, it follows that there are only a finite number of $\fequiv[\cl{C}]$-classes.
\end{proof}

\begin{lemma}
\label{ClonesCase34}
Let $A = \nset{3} = \{a, b, c\}$. For the relation
\[
\rho =
\begin{pmatrix}
0 & 1 & 2 & a & a & b & b & c & c & a & b \\
0 & 1 & 2 & a & a & b & b & c & c & b & a \\
0 & 1 & 2 & b & c & a & c & a & b & c & c
\end{pmatrix}
\]
in line 34 of Table \ref{table-submaxcl3}, $\Pol \rho \notin \FF_\nset{3}$.
\end{lemma}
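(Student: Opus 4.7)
Set $\cl{C} = \Pol \rho$. The plan is to exhibit, for each sufficiently large $n$, an $n$-ary operation $f_n$ with $f_n \not\fequiv[\cl{C}] f_m$ whenever $n \neq m$, thereby producing infinitely many $\fequiv[\cl{C}]$-classes. The approach follows the arity-detection strategy of Lemma \ref{ClonesCase26}.

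First I would observe that $\rho$ consists of all triples $(x,x,y)$ with $x,y\in\nset{3}$ together with the two exceptional triples $(a,b,c)$ and $(b,a,c)$; hence $(\vect{u},\vect{v},\vect{w})\in\rho^n$ iff, at every coordinate $i$ with $\vect{u}[i]\neq\vect{v}[i]$, one has $\{\vect{u}[i],\vect{v}[i]\}=\{a,b\}$ and $\vect{w}[i]=c$. I would then reuse the test tuples of Lemma \ref{ClonesCase26}: $\vect{e}_i^n\in\{a,b\}^n$ having $a$ at position $i$ and $b$ elsewhere, and $\vect{d}_\ell^n$ having $c$ at positions $\{\ell,\ell+1\}$ (or $\{1,n\}$ for $\ell=n$) and $b$ elsewhere. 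A direct check yields the combinatorial identity $(\vect{e}_i^p,\vect{e}_j^p,\vect{d}_\ell^p)\in\rho^p$ iff $i=j$ or $\{i,j\}=\mathrm{supp}(\vect{d}_\ell^p)$.

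Next, I would define $f_n$ with distinguished values on the test tuples---for example $f_n(\vect{e}_1^n)=f_n(\vect{d}_n^n)=c$ and $f_n=b$ at the remaining test tuples---and an appropriately chosen value elsewhere. Supposing $f_n=f_m\circ\vect{h}$ for some $\vect{h}\in(\cl{C}^{(n)})^m$ with $n<m$, I would match the distinguished values of $f_m$ to obtain maps $\tau,\nu\colon\{1,\ldots,n\}\to\{1,\ldots,m\}$ with $\vect{h}(\vect{e}_i^n)=\vect{e}_{\tau(i)}^m$ and $\vect{h}(\vect{d}_\ell^n)=\vect{d}_{\nu(\ell)}^m$, together with the boundary conditions $\tau(1)=1$ and (via the identity applied to $(\vect{e}_n^n,\vect{e}_n^n,\vect{d}_n^n)\in\rho^n$, once $\vect{h}(\vect{d}_n^n)=\vect{d}_m^m$ has been established) $\tau(n)=m$. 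Applying the identity to $(\vect{e}_i^n,\vect{e}_{i+1}^n,\vect{d}_i^n)$ then forces $\{\tau(i),\tau(i+1)\}=\mathrm{supp}(\vect{d}_{\nu(i)}^m)$, so $\tau(i+1)\leq\tau(i)+1$, and the chain gives $\tau(n)\leq n<m=\tau(n)$, a contradiction.

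The principal obstacle beyond Lemma \ref{ClonesCase26} is that $\Pol\rho$ admits polymorphisms which do not preserve $\{a,b\}$---for instance, the constant-$c$ unary operation is a polymorphism precisely because $(c,c,a)$, $(c,c,b)$, $(c,c,c)$ all lie in $\rho$ here, unlike in Lemma \ref{ClonesCase26}. Consequently $\vect{h}$ is not automatically constrained to map $\{a,b\}^n$ into $\{a,b\}^m$, and I would need to choose the off-test value of $f_n$ so that $f_m$ attains the distinguished values $c$ and $b$ \emph{only} on genuine test tuples of $f_m$. This compels $\vect{h}(\vect{e}_i^n)$ to lie in $\{a,b\}^m$ and $\vect{h}(\vect{d}_\ell^n)$ to be one of the $\vect{d}_\ell^m$'s, restoring the combinatorial setup required for the chain-counting contradiction above.
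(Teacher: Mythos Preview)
Your overall strategy---adapting the chain argument of Lemma~\ref{ClonesCase26}---is viable, but two steps in your sketch do not work as written.

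First, your derivation of $\tau(n)=m$ fails. Your own identity reads $(\vect{e}_i^p,\vect{e}_j^p,\vect{d}_\ell^p)\in\rho^p$ iff $i=j$ \emph{or} $\{i,j\}=\mathrm{supp}(\vect{d}_\ell^p)$; with $i=j=\tau(n)$ the first disjunct is automatically satisfied, so applying the identity to $(\vect{e}_n^n,\vect{e}_n^n,\vect{d}_n^n)$ yields no constraint on $\tau(n)$ whatsoever. The fix is to use $(\vect{e}_1^n,\vect{e}_n^n,\vect{d}_n^n)\in\rho^n$ instead: since $\tau(1)=1\neq\tau(n)$, the identity then forces $\{1,\tau(n)\}=\{1,m\}$.

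Second, and more seriously, your claim that choosing the off-test value appropriately ``compels $\vect{h}(\vect{e}_i^n)$ to lie in $\{a,b\}^m$'' is a non sequitur. That choice only guarantees that $\vect{h}(\vect{e}_i^n)$ is \emph{some} test tuple of $f_m$, and the $\vect{d}_\ell^m$'s are test tuples lying outside $\{a,b\}^m$. What actually pins down the block is the observation---used explicitly in the paper's proof---that $\Pol\rho$ preserves the equivalence $\sigma=\epsilon_3^{ab|c}$ (because $(\vect{x},\vect{y},\bar{c})\in\rho^n$ whenever $\vect{x},\vect{y}$ are coordinatewise $\sigma$-related). Hence all of $\vect{h}(\vect{e}_1^n),\ldots,\vect{h}(\vect{e}_n^n),\vect{h}(\bar{b})$ land in a single $\sigma^m$-block, and since that block must contain tuples with $f_m$-values $b$ and $c$ it must be $\{a,b\}^m$. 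Once this is in place, the argument of Lemma~\ref{ClonesCase26} ruling out $\vect{h}(\vect{d}_i^n)=\vect{e}_j^m$ via $(\bar{b},\vect{e}_i^n,\vect{d}_i^n)\in\rho^n$ goes through unchanged, and your chain $\tau(i+1)\leq\tau(i)+1$ gives the contradiction.

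For comparison, the paper takes a different route: it works directly with the $\sigma$-block structure, defining $f_n$ to be constant on each block of the form $\{a,b\}^{i-1}\times\{\,\cdot\,\}\times\{c\}\times\{a,b\}^{n-i}$ with a value depending on $i$ and on the $i$-th coordinate. The $\sigma$-preservation then controls which block each such block maps to, and a single $\rho$-relation between tuples in adjacent blocks forces $\tau(i+1)=\tau(i)+1$. This avoids the separate ``$\vect{e}$ versus $\vect{d}$'' bookkeeping altogether.
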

\begin{proof}
Let $\cl{C} = \Pol \rho$. For $n \geq 3$, define the operation $f_n \colon A^{n+1} \to A$ as follows:
\[
f_n(\vect{a}) =
\begin{cases}
0, & \text{if $\vect{a} \in \{a\} \times \{c\} \times \{a, b\}^{n-1}$,} \\
1, & \text{if $\vect{a} \in \{b\} \times \{c\} \times \{a, b\}^{n-1}$,} \\
1, & \text{if $\vect{a} \in \{a, b\}^i \times \{a\} \times \{c\} \times \{a, b\}^{n - i -1}$ for some $i \in \{1, \dotsc, n-2\}$,} \\
2, & \text{if $\vect{a} \in \{a, b\}^i \times \{b\} \times \{c\} \times \{a, b\}^{n - i -1}$ for some $i \in \{1, \dotsc, n-2\}$,} \\
2, & \text{if $\vect{a} \in \{a, b\}^{n - 1} \times \{a\} \times \{c\}$,} \\
0, & \text{if $\vect{a} \in \{a, b\}^{n - 1} \times \{b\} \times \{c\}$,} \\
0, & \text{otherwise.}
\end{cases}
\]
We claim that $f_n \not\fequiv[\cl{C}] f_m$ whenever $n \neq m$ and hence there are infinitely many $\fequiv[\cl{C}]$-classes. For, let $n < m$ and assume on the contrary that there exists a map $\vect{h} \in \cl{C}^m$ such that $f_n = f_m \circ \vect{h}$.

Note that every operation in $\cl{C}$ preserves the equivalence relation $\epsilon_3^{ab|c}$. For notational simplicity, let $\sigma = \epsilon_3^{ab|c}$. For each $n$, $\sigma^n$ partitions $A^n$ into blocks of the form $B_1 \times B_2 \times \dotsb \times B_n$ where each $B_i$ is either $\{a, b\}$ or $\{c\}$. Thus, $\vect{h}$ maps each $\sigma^n$-block $C$ of $A^n$ into a $\sigma^m$-block $C'$ of $A^m$. This implies that for $1 \leq i \leq n$,  $\alpha_1, \dotsc, \alpha_{n+1}, \beta \in \{a, b\}$,
\[
\vect{h}(\alpha_1, \dotsc, \alpha_{i-1}, \beta, c, \alpha_{i+2}, \dotsc, \alpha_{n+1}) \in \{a, b\}^{\tau(i) - 1} \times \{\beta\} \times \{c\} \times \{a, b\}^{n - \tau(i)}
\]
for some $\tau \colon \{1, \dotsc, n\} \to \{1, \dotsc, m\}$ such that $\tau(1) = 1$ and $\tau(n) = m$.

For $2 \leq i \leq n - 1$, the $(n+1)$-tuples
\[
(a, \dotsc, a, \underset{i}{a}, \underset{i+1}{c}, a, \dotsc, a), \quad
(a, \dotsc, a, \underset{i}{b}, \underset{i+1}{c}, a, \dotsc, a), \quad
(a, \dotsc, a, \underset{i}{c}, \underset{i+1}{a}, a, \dotsc, a),
\]
are coordinatewise $\rho$-related. Thus, their images by $\vect{h}$, namely
\begin{align*}
&(\alpha_1, \dotsc, \alpha_{\tau(i+1)-1}, a, c, \alpha_{\tau(i+1)+2}, \dotsc, \alpha_{n+1}), \\
&(\beta_1, \dotsc, \beta_{\tau(i+1)-1}, b, c, \beta_{\tau(i+1)+2}, \dotsc, \beta_{n+1}), \\
&(\gamma_1, \dotsc, \gamma_{\tau(i)-1}, a, c, \gamma_{\tau(i)+2}, \dotsc, \gamma_{n+1}),
\end{align*}
for some $\alpha_i$'s, $\beta_i$'s, $\gamma_i$'s in $\{a, b\}$, are coordinatewise $\rho$-related as well. But this is only possible if $\tau(i + 1) = \tau(i) + 1$ for all $i \in \{1, \dotsc, n - 1\}$. Since $\tau(1) = 1$, it follows that $\tau(n) = n < m = \tau(m)$, and we have reached the desired contradiction.
\end{proof}

\begin{lemma}
\label{ClonesCase35}
Let $A = \nset{3} = \{a, b, c\}$. For the relation
\[
\rho =
\begin{pmatrix}
a & a & a & a & b & b & b & b & a & b & c & c & c \\
a & a & b & b & a & a & b & b & a & b & c & c & c \\
a & b & a & b & a & b & a & b & c & c & a & b & c
\end{pmatrix}
\]
in line 35 of Table \ref{table-submaxcl3}, $\Pol \rho \notin \FF_\nset{3}$.
\end{lemma}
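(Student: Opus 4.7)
The plan is to mimic the strategies of Lemmas \ref{ClonesCase26} and \ref{ClonesCase34}: exhibit an explicit infinite family $\{f_n\}_{n \geq N}$ of operations on $A$ and show that $f_n \not\fequiv[\cl{C}] f_m$ whenever $n \neq m$, where $\cl{C} = \Pol \rho$. This immediately gives infinitely many $\fequiv[\cl{C}]$-classes. Before the construction, I would record the enabling observation that every $f \in \cl{C}$ preserves the equivalence relation $\sigma = \epsilon_3^{ab|c}$, because $\sigma$ is primitively positively definable from $\rho$ as a projection: inspecting the columns of $\rho$ shows that $\sigma(x,y)$ holds iff there exists $z \in A$ with $(x,y,z) \in \rho$. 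Consequently every $\vect{h} \in \cl{C}$ maps each $\sigma^n$-block $B_1 \times \dotsb \times B_n$ (each $B_i \in \{\{a,b\},\{c\}\}$) into a single $\sigma^m$-block.

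The crucial local feature of $\rho$ that I would exploit is the failure of $(a,b,c)$ and $(b,a,c)$ to lie in $\rho$: whenever $(\vect{u},\vect{v},\vect{w}) \in \rho^n$ and some coordinate $i$ has $w_i = c$ with $u_i, v_i \in \{a,b\}$, one is forced to $u_i = v_i$. The third coordinate can therefore \emph{witness} equality of the first two at any chosen position, while $\{a,b\}^3 \subseteq \rho$ means that no constraint at all is imposed at positions where $w_i \in \{a,b\}$. This dichotomy is precisely the kind of structure exploited in Lemma \ref{ClonesCase34}.

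Guided by this, I would define $f_n \colon A^{n+1} \to A$ using the same template as the $f_n$ in Lemma \ref{ClonesCase34}: partition $\{a,b\}^{n-1} \times \{a,b\} \times \{c\}$ together with $\{a,b\} \times \{c\} \times \{a,b\}^{n-1}$ into strata of the form $\{a,b\}^i \times \{\alpha\} \times \{c\} \times \{a,b\}^{n-i-1}$ indexed by a position $i$ and a marker $\alpha \in \{a,b\}$, assign values in $\{0,1,2\}$ to these strata in a strictly ``walking'' pattern (so that the value at position $i$ is determined by $i \bmod 3$ and $\alpha$), and set $f_n$ to a default value on everything else. Assuming $f_n = f_m \circ \vect{h}$ with $m > n$ and $\vect{h} \in \cl{C}^{n+1}$, $\sigma$-preservation then forces $\vect{h}$ to map each $c$-marked stratum of $A^{n+1}$ into the corresponding type of stratum in $A^{m+1}$, producing a ``position map'' $\tau\colon\{1,\dots,n\}\to\{1,\dots,m\}$ with $\tau(1)=1$ and $\tau(n)=m$.

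Finally, to force a contradiction I would apply $\vect{h}$ to suitable coordinatewise $\rho$-related triples analogous to those in the last paragraph of the proof of Lemma \ref{ClonesCase34}: triples that put adjacent $c$-markers at positions $i$ and $i{+}1$ and use the witnessing property above to force $\tau(i+1) \leq \tau(i)+1$. Combined with $\tau(1)=1$ this yields $\tau(n) \leq n < m = \tau(n)$, the desired contradiction. The main obstacle is calibrating $f_n$ so that the ``walking'' value-pattern interacts correctly with the specific triples in $\rho$ (in particular, the tuples $(c,c,a),(c,c,b)$ and the symmetric treatment of the first two coordinates); this is the step where I would expect the most bookkeeping, analogous to the case analysis in Lemma \ref{ClonesCase34}.
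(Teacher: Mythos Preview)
Your overall plan (build an infinite family $\{f_n\}$ and derive a contradiction from $f_n = f_m \circ \vect{h}$ for $n<m$) and your two preliminary observations ($\cl C$ preserves $\sigma=\epsilon_3^{ab|c}$; the triples $(a,b,c),(b,a,c)$ are missing from $\rho$) are exactly right. The gap is in the choice of template. Reusing the Lemma~\ref{ClonesCase34} pattern---tuples that are \emph{mostly in $\{a,b\}$ with a single $c$}---does not work for $\rho$. Indeed, with the $f_n$ copied verbatim from Lemma~\ref{ClonesCase34} one has $f_n \subf[\cl C] f_m$ for $n<m$: for example, the map $\vect h\colon A^4\to A^5$ given by
\[
\vect h(x_1,x_2,x_3,x_4)=(x_1,\,x_2,\,x_3,\,g(x_3,x_4),\,x_4),\qquad
g(x,y)=\begin{cases}x,&x\in\{a,b\},\\ a,&x=c,\end{cases}
\]
has all components in $\cl C$ (projections trivially, and $g$ because $\range g\subseteq\{a,b\}$ and $\{a,b\}^3\subseteq\rho$), and a block-by-block check gives $f_3=f_4\circ\vect h$. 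So the direction you set out to contradict actually holds.

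The reason is structural. Your ``witnessing'' property says: a $c$ in the \emph{third} coordinate forces the first two to agree there. With the Lemma~\ref{ClonesCase34} template the third tuple carries only \emph{one} $c$, so you obtain a single equality constraint on $\vect h(\vect u),\vect h(\vect v)$---far too weak to pin down $\tau(i{+}1)$ relative to $\tau(i)$, since the components of $\vect h$ are unconstrained in $\{a,b\}$ at all other positions. The paper instead uses the dual template $\vect d^n_{i,\alpha\beta}=(c,\dots,c,\underset{i}{\alpha},\underset{i+1}{\beta},c,\dots,c)$, so that the third tuple carries $c$ \emph{everywhere except two positions}. Then the witnessing property forces the first two tuples to agree at all but two positions; since $\vect d_{i,aa}$ and $\vect d_{i,ba}$ differ exactly at position $i$, one gets $(\vect d_{i,aa},\vect d_{i,ba},\vect d_{j,aa})\in\rho^p$ iff $j\in\{i-1,i\}$, and this is what yields $\tau(i{+}1)\in\{\tau(i),\tau(i)+1\}$. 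In short: you identified the right local obstruction but pointed it the wrong way---to exploit it you must flood the tuples with $c$'s, not with elements of $\{a,b\}$.
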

\begin{proof}
Let $\cl{C} = \Pol \rho$. For $n \geq 3$, $1 \leq i \leq n$, $\alpha, \beta \in \{a, b\}$ denote by $\vect{d}_{i, \alpha \beta}^n$ the $(n + 1)$-tuple
\[
(c, \dotsc, c, \underset{i}{\alpha}, \underset{i+1}{\beta}, c, \dotsc, c).
\]
For $n \geq 3$, define the operation $f_n \colon A^{n+1} \to A$ as follows:
\[
f_n(\vect{a}) =
\begin{cases}
0, & \text{if $\vect{a} = \vect{d}_{1, a \beta}^n$ for some $\beta \in \{a, b\}$,} \\
1, & \text{if $\vect{a} = \vect{d}_{1, b \beta}^n$ for some $\beta \in \{a, b\}$,} \\
1, & \text{if $\vect{a} = \vect{d}_{i, a \beta}^n$ for some $i \in \{2, \dotsc, n - 1\}$, $\beta \in \{a, b\}$,} \\
2, & \text{if $\vect{a} = \vect{d}_{i, b \beta}^n$ for some $i \in \{2, \dotsc, n - 1\}$, $\beta \in \{a, b\}$,} \\
2, & \text{if $\vect{a} = \vect{d}_{n, a \beta}^n$ for some $\beta \in \{a, b\}$,} \\
0, & \text{if $\vect{a} = \vect{d}_{n, b \beta}^n$ for some $\beta \in \{a, b\}$,} \\
0, & \text{otherwise.}
\end{cases}
\]
We claim that $f_n \not\fequiv[\cl{C}] f_m$ whenever $n \neq m$ and hence there are infinitely many $\fequiv[\cl{C}]$-classes. For, let $n < m$ and assume on the contrary that there exists a map $\vect{h} \in \cl{C}^m$ such that $f_n = f_m \circ \vect{h}$.

Note that every operation in $\cl{C}$ preserves the equivalence relation $\epsilon_3^{ab|c}$. For notational simplicity, let $\sigma = \epsilon_3^{ab|c}$. For each $n$, $\sigma^n$ partitions $A^n$ into blocks of the form $B_1 \times B_2 \times \dotsb \times B_n$ where each $B_i$ is either $\{a, b\}$ or $\{c\}$. Thus, $\vect{h}$ maps each $\sigma^n$-block $C$ of $A^n$ into some $\sigma^m$-block $C'$ of $A^m$. Observe that for every $p \geq 3$,
\begin{compactitem}
\item the only $\sigma^p$-block $C$ of $A^p$ such that $\range f_p|_C = \{0, 1\}$ is the block of $\vect{d}_{1,aa}^p$,
\item the only $\sigma^p$-block $C$ of $A^p$ such that $\range f_p|_C = \{0, 2\}$ is the block of $\vect{d}_{p,aa}^p$,
\item the only $\sigma^p$-blocks $C$ of $A^p$ such that $\range f_p|_C = \{1, 2\}$ are the blocks of $\vect{d}_{i,aa}^p$ for $1 < i < p$, and
\item for all other $\sigma^p$-blocks $C$ of $A^p$, $\range f_p|_C = \{0\}$.
\end{compactitem}
This implies that there exists a map $\tau \colon \{1, \dotsc, n\} \to \{1, \dotsc, m\}$ such that $\tau(1) = 1$, $\tau(n) = m$ and for every $i$ ($1 \leq i \leq n$) and $\alpha, \beta \in \{a, b\}$, it holds that $\vect{h}(\vect{d}_{i, \alpha \beta}^n) \in \{\vect{d}_{\tau(i), \alpha a}^m, \vect{d}_{\tau(i), \alpha b}^m\}$.

It is easy to verify that for all $p \geq 3$, $(\vect{d}_{i,aa}, \vect{d}_{i,ba}, \vect{d}_{j,aa}) \in \rho^p$ if and only if $i = j$ or $i = j + 1$. Since for every $i$ ($1 \leq i \leq n - 1$), $(\vect{d}_{i+1,aa}^n, \vect{d}_{i+1,ba}^n, \vect{d}_{i,aa}^n) \in \rho^{n+1}$, it follows that
\[
(\vect{d}_{\tau(i+1),a \beta_1}^m, \vect{d}_{\tau(i+1),b \beta_2}^m, \vect{d}_{\tau(i),a \beta_3}^m) = \bigl( \vect{h}(\vect{d}_{i+1,aa}^n), \vect{h}(\vect{d}_{i+1,ba}^n), \vect{h}(\vect{d}_{i,aa}^n) \bigr) \in \rho^m
\]
for some $\beta_1, \beta_2, \beta_3 \in \{a, b\}$. By the previous observation, $\tau(i + 1) \in \{\tau(i), \tau(i) + 1\}$. Since $\tau(1) = 1$, this implies that $\tau(n) \leq n < m = \tau(n)$, and we have reached the desired contradiction.
\end{proof}

\begin{lemma}
\label{ClonesCase40}
Let $A = \nset{3} = \{a, b, c\}$. For the relation
\[
\rho = \begin{pmatrix}
a & b & a & c & a & b & a & a & b & b & a & b & c & a & a & c & c & a & c \\
b & a & c & a & a & a & b & a & b & a & b & b & a & c & a & c & a & c & c \\
c & c & b & b & a & a & a & b & a & b & b & b & a & a & c & a & c & c & c
\end{pmatrix}
\]
in line 40 of Table \ref{table-submaxcl3}, $\Pol \rho \notin \FF_\nset{3}$.
\end{lemma}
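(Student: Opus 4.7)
The plan is to exhibit an explicit infinite family of operations $\{f_n\}_{n \geq 3}$ on $\nset{3}$ and show that no two of them are $\cl{C}$-equivalent, where $\cl{C} = \Pol \rho$. Since $\fequiv[\cl{C}]$ classes that differ across arities are already distinct, this will yield infinitely many $\fequiv[\cl{C}]$-classes, proving $\cl{C} \notin \FF_\nset{3}$. The strategy follows the blueprint of Lemmas \ref{ClonesCase26}, \ref{ClonesCase34}, and \ref{ClonesCase35}: build $f_n$ around a sequence of carefully chosen distinguishing tuples whose positions are forced to be preserved (in order) by any witness $\vect{h}$ of a $\cl{C}$-minor relation.

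The first step is to extract the invariants of $\cl{C}$ that I will need. I would note that $\rho$ is symmetric in its first two coordinates, that $\{a\} \times A \times A \subseteq \rho$ and $A \times \{a\} \times A \subseteq \rho$, but that $(b,b,c) \notin \rho$ and $(b,c,a) \notin \rho$, so $\rho$ is neither totally reflexive nor symmetric in positions $2,3$. Moreover, I expect to check that $\epsilon_3^{ab|c}$ is preserved by every $f \in \cl{C}$ (so $\vect{h}$ maps each $\sigma^n$-block into a $\sigma^m$-block, with $\sigma = \epsilon_3^{ab|c}$), and similarly that $\{a,b\}$ is preserved; both facts will be used to restrict the coarse behaviour of $\vect{h}$ exactly as in Lemmas \ref{ClonesCase34}, \ref{ClonesCase35}.

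Next I would define, for each $n \geq 3$ and each position $i \in \{1, \dotsc, n\}$, a distinguishing $(n+1)$-tuple $\vect{d}_i^n$ of the form $(c, \dotsc, c, \underset{i}{x}, \underset{i+1}{y}, c, \dotsc, c)$ with $x, y \in \{a, b\}$, and define $f_n$ so that its value depends on the ordered pair $(i, x)$ in a way that distinguishes the endpoints $i = 1$ and $i = n$ from the interior, in analogy with Lemma \ref{ClonesCase35}. A hypothetical $\vect{h}$ with $f_n = f_m \circ \vect{h}$ then must send each $\vect{d}_i^n$ to some $\vect{d}_{\tau(i)}^m$ with $\tau(1) = 1$ and $\tau(n) = m$. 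Using a specific triple of tuples $(\vect{d}_{i+1,*}^n, \vect{d}_{i+1,*}^n, \vect{d}_{i,*}^n)$ that lies in $\rho^{n+1}$, together with the classification (in $\rho^m$) of which pairs $(\vect{d}_j^m, \vect{d}_k^m)$ can be completed to a $\rho$-related triple, I expect to force $\tau(i+1) \in \{\tau(i), \tau(i) + 1\}$ for all $i$. This yields $\tau(n) \leq n < m = \tau(n)$, the desired contradiction.

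The main obstacle is step two: choosing the right distinguishing tuples and the right labels for $f_n$ so that the possible images $\vect{h}(\vect{d}_i^n)$ are cut down to $\{\vect{d}_{\tau(i), \bullet}^m\}$ and the consecutive-position constraint actually emerges. Unlike Lemmas \ref{ClonesCase34}, \ref{ClonesCase35}, where the invariants of the defining relation force a clean "window of two neighbouring positions", here the asymmetry between position $3$ and positions $1,2$ and the extra triples with two $c$'s in $\rho$ require a careful enumeration of which "witness" triples in $\rho^{n+1}$ can exist. Once the right templates $\vect{d}_i^n$ and labelling scheme for $f_n$ are pinned down, the resulting monotonicity argument on $\tau$ is routine and closes the proof.
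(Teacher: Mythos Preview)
Your plan has a genuine gap at the very first step: neither $\epsilon_3^{ab|c}$ nor $\{a,b\}$ is an invariant of $\cl{C}=\Pol\rho$. Consider the unary map $w\colon a\mapsto a,\ b\mapsto c,\ c\mapsto a$. A direct check of all nineteen columns of $\rho$ shows $w\in\Pol\rho$ (every triple in $\rho$ is sent to a triple whose entries lie in $\{a,c\}$ with at least one $a$ in each of the first two coordinates, and all such triples belong to $\rho$). Yet $w(b)=c\notin\{a,b\}$, so $\{a,b\}$ is not preserved, and $(a,b)\in\epsilon_3^{ab|c}$ while $(w(a),w(b))=(a,c)\notin\epsilon_3^{ab|c}$, so $\epsilon_3^{ab|c}$ is not preserved either. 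Hence the block-preservation mechanism you import from Lemmas~\ref{ClonesCase34} and~\ref{ClonesCase35} is unavailable here, and the argument that $\vect{h}$ must carry your two-letter windows $\vect{d}_i^n$ to windows $\vect{d}_{\tau(i)}^m$ collapses: nothing prevents $\vect{h}$ from scattering the $\{a,b\}$-entries of $\vect{d}_i^n$ across non-adjacent positions or replacing them by $c$'s.

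The paper's proof avoids this problem by a different device. It uses $n$-tuples $\vect{a}_i^n$ with a \emph{three}-letter pattern $a\,b\,a$ (at positions $i-1,i,i+1$ cyclically) on a background of $c$'s, defines $f_n$ to take only the values $0,1,2$ with $f_n^{-1}(1)=\{\vect{a}_1^n,\dots,\vect{a}_n^n\}$ and $f_n^{-1}(2)=\{\bar c\}$, and then argues purely from $f_n=f_m\circ\vect{h}$ (no auxiliary invariants) that $\vect{h}(\bar c)=\bar c$ and $\vect{h}(\vect{a}_i^n)=\vect{a}_{\tau(i)}^m$ for some $\tau$. The combinatorial core is the observation that $(x,y,c)\in\rho$ iff $x=a$ or $y=a$ or $x=y=c$, which forces $(\vect{a}_i^p,\vect{a}_j^p,\bar c)\in\rho^p$ exactly when $j\equiv i\pm 1\pmod p$. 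This gives $\tau(i+1)\in\{\tau(i)-1,\tau(i)+1\}$ cyclically, and a parity argument (taking $n<m$ both odd) yields the contradiction. So the construction is cyclic rather than linear, the contradiction is parity-based rather than monotonicity-based, and no appeal to $\epsilon_3^{ab|c}$ or $\{a,b\}$ is made.
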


\begin{proof}
Let $\cl{C} = \Pol \rho$. For $n \geq 3$, $1 \leq i \leq n - 1$, denote by $\vect{a}_i^n$ the $n$-tuple satisfying
\[
\vect{a}_i^n(i) = b, \qquad
\vect{a}_i^n(i - 1) = \vect{a}_i^n(i + 1) = a, \qquad
\vect{a}_i^n(j) = c \quad (j \notin \{i - 1, i, i + 1\}),
\]
where addition is done modulo $n$, i.e., $n + 1 \equiv 1$, $0 \equiv n$.

For $n \geq 3$, define the operation $f_n \colon A^n \to A$ as follows:
\[
f_n(\vect{a}) =
\begin{cases}
1, & \text{if $\vect{a} = \vect{a}_i^n$ for some $i \in \{1, \dotsc, n\}$,} \\
2, & \text{if $\vect{a} = \bar{c}$,} \\
0, & \text{otherwise.}
\end{cases}
\]
We claim that if $n$ and $m$ are distinct odd positive integers, then $f_n \not\fequiv[\cl{C}] f_m$, and hence there are infinitely many $\fequiv[\cl{C}]$-classes. For, let $n < m$ and assume on the contrary that there exists a map $\vect{h} = (h_1, \dotsc, h_m) \in \cl{C}^m$ such that $f_n = f_m \circ \vect{h}$. Then $\vect{h}(\bar{c}) = \bar{c}$ and there exists a map $\tau \colon \{1, \dotsc, n\} \to \{1, \dotsc, m\}$ such that $\vect{h}(\vect{a}_i^n) = \vect{a}_{\tau(i)}^m$.

It is easy to verify that for any $p$, $(\vect{a}_i^p, \vect{a}_j^p, \bar{c}) \in \rho^p$ if and only if $i = j + 1$ or $i = j - 1$ (where addition is done modulo $p$). Since for every $i \in \{1, \dotsc, n\}$, $(\vect{a}_i^p, \vect{a}_{i+1}^p, \bar{c}) \in \rho^n$ (addition modulo $n$), we have that $(\vect{a}_{\tau(i)}^m, \vect{a}_{\tau(i+1)}^m, \bar{c}) = \bigl(\vect{h}(\vect{a}_i^n), \vect{h}(\vect{a}_{i+1}^n), \vect{h}(\bar{c}) \bigr) \in \rho^m$ (addition modulo $n$ and $m$, respectively). By the previous observation, $\tau(i + 1) \in \{\tau(i) - 1, \tau(i) + 1\}$ (addition modulo $n$ and $m$, respectively). It is then easy to verify that whenever $n$ and $m$ are odd integers and $n < m$, it is not possible to have such a map $\tau$ (for, $\tau$ cannot be surjective, and thus the preimages of each $j \in \{1, \dotsc, m\}$ have the same parity). We have reached the desired contradiction.
\end{proof}

\section*{Acknowledgements}

We would like to thank Miguel Couceiro for helpful discussions concerning the clones in lines 34 and 35 of Table \ref{table-submaxcl3}.

\end{document}